\newtheorem{theorem}{Theorem}[section]
\newtheorem{corollary}{Corollary}[section]
\newtheorem{lemma}{Lemma}[section]
\newtheorem{proposition}{Proposition}[section]
\newtheorem{definition}{Definition}[section]
\newtheorem{remark}{Remark}[section]
\newcommand{\bal}{\begin{align}}
\newcommand{\bbal}{\begin{align*}}
\newcommand{\beq}{\begin{equation}}
\newcommand{\eeq}{\end{equation}}
\newcommand{\bca}{\begin{cases}}
\newcommand{\eca}{\end{cases}}
\def\div{\mathord{{\rm div}}}
\newcommand{\pa}{\partial}
\newcommand{\fr}{\frac}
\newcommand{\De}{\dot{\Delta}}
\newcommand{\cd}{\cdot}
\newcommand{\dd}{\mathrm{d}}
\newcommand{\LL}{\tilde{L}}
\newcommand{\R}{\mathbb{R}}
\newcommand{\les}{\lesssim}
\newcommand{\f}{\left}
\newcommand{\g}{\right}
\begin{document}
\bibliographystyle{plain}
\title{On the ill-posedness for the Navier--Stokes equations in the weakest Besov spaces}

\author{Yanghai Yu\footnote{E-mail:  yuyanghai214@sina.com; lijinlu@gnnu.edu.cn}\; and Jinlu Li  \\
\small $^1$ School of Mathematics and Statistics, Anhui Normal University, Wuhu 241002, China\\
\small $^2$ School of Mathematics and Computer Sciences, Gannan Normal University, Ganzhou 341000, China}

\date{\today}
\maketitle\noindent{\hrulefill}

{\bf Abstract:}  It is proved in \cite{IO21} that the Cauchy problem for the full compressible Navier--Stokes equations
of the ideal gas is ill-posed in $\dot{B}_{p, q}^{2 / p}(\mathbb{R}^2) \times \dot{B}_{p, q}^{2 / p-1}(\mathbb{R}^2)  \times \dot{B}_{p, q}^{2 / p-2}(\mathbb{R}^2) $ with $1\leq p\leq \infty$ and $1\leq q<\infty$. In this paper, we aim to solve the end-point case left in \cite{IO21} and prove that the Cauchy problem  is ill-posed in $\dot{B}_{p, \infty}^{d / p}(\mathbb{R}^d) \times \dot{B}_{p, \infty}^{d / p-1}(\mathbb{R}^d) \times \dot{B}_{p, \infty}^{d / p-2}(\mathbb{R}^d)$ with $1\leq p\leq\infty$ by constructing a sequence of initial data which shows that the solution map is discontinuous at zero. As a by-product, we demonstrate that the incompressible Navier--Stokes equations is also ill-posed in $\dot{B}_{p,\infty}^{d/p-1}(\mathbb{R}^d)$, which is an interesting open problem in itself.

{\bf Keywords:} Navier--Stokes equations, Ill-posedness, Besov spaces

{\bf MSC (2010):}  35Q30; 76N06; 76N10
\vskip0mm\noindent{\hrulefill}

\section{Introduction}
The full compressible Navier--Stokes equations in $\mathbb{R}^d$ with $d\geq2$ read as follows:
\begin{align}\label{00}\tag{fCNS}
\begin{cases}
\partial_t \tilde{\rho}+\operatorname{div}(\tilde{\rho} u)=0, \\
\partial_t(\tilde{\rho} u)+\operatorname{div}(\tilde{\rho} u \otimes u)=\operatorname{div} \tau, \\
\partial_t\left(\tilde{\rho} u\left(e+\frac{|u|^2}{2}\right)\right)+\operatorname{div}\left(\tilde{\rho} u\left(e+\frac{|u|^2}{2}\right)\right)=\operatorname{div}(\tau \cdot u+\kappa \nabla \theta),
\end{cases}
\end{align}
where the unknown functions $\tilde{\rho}(t, x), u(t, x)=\left(u_1(t, x), \cdots, u_d(t, x)\right), e(t, x)$ denote the density, velocity of the fluid and the intern energy per unit mass respectively, $\kappa>0$ is the thermal conduction parameter, and $\theta(t, x)$ is the temperature. The internal stress tensor $\tau$ is given by
$$
\tau=2 \mu \mathrm{D}(u)+(\lambda \operatorname{div} u-P) \mathrm{Id},
$$
where $\mu$ and $\lambda$ are the Lam\'{e} constants satisfying $\mu>0$ and $2 \mu+\lambda>0$. The strain tensor $\mathrm{D}(u)=(\nabla u+\nabla^{\mathsf{T}}u)/2$  is the symmetric part of the velocity gradient $\nabla u$, whose $(i,j)$-th component is given by $(\mathrm{D}u)_{ij}=(\partial_iu_j+\partial_ju_i)/2$ with $1\leq i,j\leq d$.

For the ideal gas, $e=c_V \theta$ and $P=\tilde{\rho} R \theta$ for some constants $c_V>0, R>0$. We note that the other physical constants except for $\mu, \lambda, \kappa$ are taken by 1 for simplicity, namely, $c_V =R=1$. In such case, the system \eqref{00} can be rewritten as
\begin{align}\label{1}
\begin{cases}
\partial_t \tilde{\rho}+\operatorname{div}(\tilde{\rho} u)=0, \\
\partial_t(\tilde{\rho} u)+\operatorname{div}(\tilde{\rho} u \otimes u)+\nabla(\tilde{\rho} \theta)=\mu \Delta u+(\mu+\lambda) \nabla \operatorname{div} u, \\
\partial_t(\tilde{\rho} \theta)+\operatorname{div}(\tilde{\rho} \theta u)+\tilde{\rho} \theta \operatorname{div} u-\kappa \Delta \theta=2 \mu|\mathrm{D}(u)|^2+\lambda|\operatorname{div} u|^2.
\end{cases}
\end{align}
Here $|A|^2$ denotes the trace of $A A^{\top}$ for a matrix $A$ and its transpose $A^{\top}$.
When the pressure depends only on the density, we get the baratropic Navier--Stokes equations
\begin{align}\label{cns}
\begin{cases}
\partial_t \tilde{\rho}+\operatorname{div}(\tilde{\rho} u)=0, \\
\partial_t(\tilde{\rho} u)+\operatorname{div}(\tilde{\rho} u \otimes u)-\mu \Delta u-(\lambda+\mu) \nabla \operatorname{div} u+\nabla P=0.
\end{cases}
\end{align}
In this paper, we are concerned with the Cauchy problem of the system \eqref{1} and \eqref{cns} in $\R^+\times \R^d$ together with the initial data
$(\tilde{\rho}, u, \theta)(t=0)=(\tilde{\rho}_0, u_0, \theta_0)$ and
$(\tilde{\rho}, u)(t=0)=(\tilde{\rho}_0, u_0)$, respectively.
We assume that the density of the fluid satisfies the non-vacuum condition
$$
\inf _{t \geq 0,\; x \in \R^d} \tilde{\rho}(t, x)>0 .
$$
We can check that if $(\tilde{\rho}, u, \theta)$ solves \eqref{1}, so does $\left(\tilde{\rho}_\ell(t, x), u_\ell(t, x), \theta_\ell(t, x)\right)$, where$$
\left(\tilde{\rho}_\ell(t, x), u_\ell(t, x), \theta_\ell(t, x)\right)=\left(\tilde{\rho}(\ell^2 t, \ell x), \ell u(\ell^2 t, \ell x), \ell^2 \theta(\ell^2 t, \ell x)\right), \quad \ell>0.
$$
This suggests us to choose initial data $(\tilde{\rho}_0, u_0, \theta_0)$ in ``critical spaces" whose norm is invariant for all $\ell>0$ (up to a constant independent of $\ell$) by the transformation $(\tilde{\rho}_0, u_0, \theta_0)(x) \mapsto(\tilde{\rho}_0(\ell x), \ell u_0(\ell x), \ell^2 \theta_0(\ell x))$.
It is natural that $\dot{H}^{{d}/{p}}_p\times\dot{H}^{{d}/{p}-1}_p\times\dot{H}^{{d}/{p}-2}_p$ and $\dot{B}_{p, q}^{{d}/{p}}\times\dot{B}_{p, q}^{{d}/{p}-1}\times\dot{B}_{p, q}^{{d}/{p}-2}$are critical spaces to  \eqref{1}.
Motivated by Fujita-Kato's theory on the incompressible Navier--Stokes equations \cite{fu} and based on the Littlewood-Paley theory, Danchin \cite{d1,d3,d4,d5,d6} studied the well-posedness for the compressible Navier-Stokes equations in critical Besov spaces (see also Chen-Miao-Zhang \cite{C1,C2} and Haspot \cite{H1,H2}). Roughly speaking, the system \eqref{1} is locally well-posed for the initial data
$$
\left(\tilde{\rho}_0-\bar{\rho}, u_0, \theta_0\right) \in \dot{B}_{p, 1}^{3 / p} \times \dot{B}_{p, 1}^{3 / p-1} \times \dot{B}_{p, 1}^{3 / p-2} \quad \text { with } p<3
$$
and the system \eqref{cns} is locally well-posed for the initial data
$$
\left(\tilde{\rho}_0-\bar{\rho}, u_0\right) \in \dot{B}_{p, 1}^{3 / p} \times\dot{B}_{p, 1}^{3 / p-1} \quad \text { with } p<6 .
$$
A natural question is whether or not the system \eqref{1} and \eqref{cns} are well-posed in the critical Besov spaces with $p \geq 3$ and $p\geq6$ respectively. Motivated by
Bourgain-Pavlovi\'{c} \cite{b08} and Germain \cite{g08} who proved the ill-posedness of the incompressible Navier--Stokes equations in the largest critical space $\dot{B}_{\infty, \infty}^{-1}$, Chen-Miao-Zhang \cite{C3} proved that the system \eqref{cns} is ill-posed in $\dot{B}_{p, 1}^{3/ p}\times \dot{B}_{p, 1}^{3/ p-1}$ with $p>6$.
These results as mentioned above suggest that the space $\dot{B}_{p, 1}^{d/ p}(\mathbb{R}^d) \times \dot{B}_{p, 1}^{d / p-1}(\mathbb{R}^d)$ with $p=2 d$ is the threshold space to \eqref{cns}. Indeed, in the case when $d=3$,  Chen-Wan \cite{CW} proved that the system \eqref{cns} is ill-posed in $\dot{B}_{p, 1}^{3/ p}\times \dot{B}_{6, 1}^{-1/2}$ with $p>6$.
Iwabuchi-Ogawa \cite{IO22} considered all the critical setting $p=2 d$ with $d \geq 2$ in both the density and velocity functions spaces $\dot{B}_{2d, 1}^{-1/2} \times \dot{B}_{2d, 1}^{-1/2}$ and proved that the system \eqref{cns} is ill-posed.

For the system \eqref{1}, Chen-Miao-Zhang \cite{C3} proved that \eqref{1} is ill-posed in the case when $d=3$ for the initial data
$$\left(\tilde{\rho}_0-\bar{\rho}, u_0, \theta_0\right) \in \dot{B}_{p, 1}^{3/ p}\times \dot{B}_{p, 1}^{3/ p-1}\times \dot{B}_{p, 1}^{3/ p-2}\quad \text { with } p>3.$$ Recently, Iwabuchi-Ogawa \cite{IO21} proved the ill-posedness of \eqref{1} in the case when $d=2$ for the initial data
$$
\left(\tilde{\rho}_0-\bar{\rho}, u_0, \theta_0\right) \in \dot{B}_{p, q}^{2 / p} \times \dot{B}_{p, q}^{2 / p-1} \times \dot{B}_{p, q}^{2 / p-2} \quad \text { with } (p,q)\in[1,\infty]\times [1,\infty).
$$
Aoki-Iwabuchi \cite{AI23} proved the ill-posedness of \eqref{1} in the case when $p=d=3$ for the initial data
$$
\left(\tilde{\rho}_0-\bar{\rho}, u_0, \theta_0\right) \in \dot{B}_{3, 1}^{1} \times \dot{B}_{3, 1}^{0} \times \dot{B}_{3, 1}^{-1}.
$$
Obviously, we remark that there remains a gap. Precisely speaking, for the end-point case $q=\infty$, i.e., in the weakest Besov space
$\dot{B}_{p, \infty}^{2 / p} \times \dot{B}_{p, \infty}^{2 / p-1} \times \dot{B}_{p, \infty}^{2 / p-2}$ with $1\leq p\leq\infty$, whether the system \eqref{1} is well-posed, as mentioned in \cite{IO21} (see p.575), is still an open problem. The goal of this paper is to answer the above question and we shall give a negative result that the Cauchy problem \eqref{1} is ill-posed.

In this paper, we write the density $\tilde{\rho}= 1 + \rho$ and rewrite \eqref{1} as follows.
\begin{align}\label{0}
\begin{cases}
\partial_t \rho+\operatorname{div} u+\operatorname{div}(\rho u)=0,   \\
\partial_t u-\mathcal{L} u+(1+\rho)(u \cdot \nabla) u+\nabla P=-\rho \partial_t u,  \\
\partial_t \theta+(1+\rho)(u \cdot \nabla) \theta+P \operatorname{div} u-\kappa \Delta \theta =2 \mu|\mathrm{D}(u)|^2+\lambda|\operatorname{div} u|^2-\rho \partial_t \theta, \\
\f(\rho,u,\theta\g)(t=0,x)=\f(\rho_0, u_0, \theta_0\g)(x),
\end{cases}
\end{align}
where we set
$$\mathcal{L} u=\mu \Delta u+(\mu+\lambda)\nabla\div\,u\quad\text{and}\quad P=(1 +\rho)\theta.$$

Now, we state our main result as follows.
\begin{theorem}\label{th1} Let $d\geq2$ and $1 \leq p \leq \infty$. The Cauchy problem \eqref{0} is ill-posed in $\dot{B}_{p, \infty}^{d/{p}}(\mathbb{R}^d) \times \dot{B}_{p, \infty}^{d/p-1}(\mathbb{R}^d) \times \dot{B}_{p, \infty}^{d/p-2}(\mathbb{R}^d)$. More precisely, there exist a sequence of initial data $\left\{u_{0, N}\right\}_N \subset \mathcal{S}(\mathbb{R}^d)$, positive time $\left\{T_N\right\}_N$ with $T_N \rightarrow 0$ as $N \rightarrow \infty$, and a sequence of corresponding smooth solutions $\left(\rho_N, u_N, \theta_N\right)$ in the time interval $\left[0, T_N\right]$ with the data $\left(0, u_{0, N}, 0\right)$ such that
$$
\|u_{0, N}\|_{\dot{B}_{p, \infty}^{d/{p}-1}}\thickapprox1
$$
and
\bbal
\lim _{N \rightarrow \infty}\left\|u\left(T_N\right)-u_{0, N}\right\|_{\dot{B}_{p, \infty}^{d/p-1}}\geq \eta_0
\end{align*}
for some positive constant $\eta_0$.
\end{theorem}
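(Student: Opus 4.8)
The plan is to expose the failure of continuity through a second-order Picard (Duhamel) expansion of the velocity. Writing the mild formulation of \eqref{0} with data $(0,u_{0,N},0)$, one has
$$u(t)=e^{t\mathcal{L}}u_{0,N}+\int_0^t e^{(t-s)\mathcal{L}}\,\mathcal{N}(s)\,\mathrm{d}s,$$
where $\mathcal{N}$ collects every nonlinear term of the momentum equation, namely $(1+\rho)(u\cdot\nabla)u$, $\nabla P$ and $-\rho\partial_t u$. I would decompose $u=u_L+u_B+u_R$, with $u_L=e^{t\mathcal{L}}u_{0,N}$ the linear flow, $u_B$ the bilinear term obtained by inserting $u\approx u_L$ (and the leading induced $\rho,\theta$) into $\mathcal{N}$, and $u_R$ the higher-order remainder. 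The theorem then follows from three estimates: (i) $\|u_L(T_N)-u_{0,N}\|_{\dot{B}_{p,\infty}^{d/p-1}}$ is small because $T_N$ is so short that the Lam\'e semigroup barely moves the data; (ii) $\|u_B(T_N)\|_{\dot{B}_{p,\infty}^{d/p-1}}\ge 2\eta_0$, the core displacement estimate; and (iii) $\|u_R(T_N)\|_{\dot{B}_{p,\infty}^{d/p-1}}$ is negligible. The triangle inequality $\|u(T_N)-u_{0,N}\|\ge\|u_B\|-\|u_L-u_{0,N}\|-\|u_R\|$ then yields the claim.

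\textbf{Choice of data and the role of $q=\infty$.} The whole point of the endpoint $q=\infty$ is that the homogeneous Besov norm is a \emph{supremum} over dyadic shells rather than an $\ell^q$ sum. I would therefore take $u_{0,N}$ to be a superposition of $\sim N$ smooth high-frequency wave packets placed at well-separated dyadic scales, each individually normalized in $\dot{B}_{p,\infty}^{d/p-1}$. Since the blocks do not overlap, $\|u_{0,N}\|_{\dot{B}_{p,\infty}^{d/p-1}}\approx 1$ uniformly in $N$, whereas in any $\dot{B}_{p,q}^{d/p-1}$ with $q<\infty$ the same family would have norm $\sim N^{1/q}\to\infty$; this is exactly the gap left open in \cite{IO21}. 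The frequencies are arranged so that the quadratic self-interactions of the packets all deposit mass, with a definite sign, into one common fixed low-frequency shell $\xi_0$, so that the $N$ contributions add \emph{constructively}. This accumulation at a single shell forces $\|u_B(T_N)\|_{\dot{B}_{p,\infty}^{d/p-1}}$ to stay bounded below while the data norm does not grow.

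\textbf{Time scale and the bilinear lower bound.} I would set $T_N$ comparable to the dissipative time of the top frequency, i.e. $T_N\,2^{2N}\lesssim 1$, which simultaneously keeps $u_L$ close to the data in (i) and endows the Duhamel integral with a factor $\approx T_N$. On the low-frequency output shell $e^{(t-s)\mathcal{L}}$ acts essentially as the identity, so the constructive sum of the $\sim N$ resonant interactions survives the time integration and produces a contribution of fixed size $\ge 2\eta_0$ at $\xi_0$. A subtle point is to verify that this resonant output is not annihilated by the structure of $\mathcal{L}$ and of $\nabla P$, i.e. that the relevant projection of the quadratic interaction onto $\xi_0$ is genuinely nonzero. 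In the compressible setting one must also keep track of the fact that $\rho$ is generated already at \emph{first} order through the linear source $\operatorname{div}u$ in the continuity equation, whereas $\theta$, and hence the genuine pressure contribution $\nabla P$, is generated only at second order via $2\mu|\mathrm{D}(u)|^2+\lambda|\operatorname{div}u|^2$.

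\textbf{Main obstacle.} The hard part will be step (iii): controlling the remainder $u_R$, together with the coupled density and temperature corrections, in the endpoint space $\dot{B}_{p,\infty}^{d/p-1}$ over $[0,T_N]$. Unlike the case $q<\infty$, the space $\dot{B}_{p,\infty}$ lacks the summability that makes product laws and the parabolic smoothing estimates for $\mathcal{L}$ and $e^{\kappa t\Delta}$ clean, so one must run a careful fixed-point or a priori argument in a well-chosen auxiliary (Chemin--Lerner type) norm $\widetilde{L}^\infty_T\dot{B}_{p,\infty}^{d/p-1}$ exploiting the dissipative gain, and show the error is $o(1)$ uniformly in $N$. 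The genuinely compressible term $-\rho\partial_t u$ is the most delicate: once the time-smallness of $\rho^{(1)}$ is accounted for it is quadratic of the same order as the convection, yet it carries two extra derivatives on $u_L$ through $\partial_t u_L\approx\mathcal{L}u_L$, so it must be incorporated into the bilinear analysis rather than discarded, and its high-frequency structure makes the endpoint bounds touchy. Finally, the case $p=\infty$, where $\dot{B}_{\infty,\infty}^{0}$ product estimates and the nonlocality of $\nabla\operatorname{div}$ in $\mathcal{L}$ are especially fragile, will require separate treatment.
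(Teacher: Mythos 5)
Your outline has a genuine internal inconsistency: claims (i) and (ii) cannot hold simultaneously, so the triangle inequality you plan to use never closes. Quantitatively, suppose the packets sit at well-separated frequencies $2^{n_1}<\cdots<2^{n_K}=2^{N}$, each of unit size in $\dot{B}^{d/p-1}_{p,\infty}$, i.e.\ with $L^p$-amplitude $a_j\approx \delta 2^{-n_j(d/p-1)}$. Because the semigroup damps the packet at scale $2^{n_j}$ like $e^{-c s 2^{2n_j}}$, the resonant (low-frequency) quadratic output of that packet (or of a resonant pair at that scale) deposited on a fixed shell $|\xi_0|\approx 1$ is, after the Duhamel time integration, of size $\approx a_j^2\, 2^{-2n_j}\min\bigl(1,t2^{2n_j}\bigr)=\delta^2 2^{-2n_j d/p}\min\bigl(1,t2^{2n_j}\bigr)$ — note that for divergence-free packets the derivative in $\mathbb{P}\nabla\cdot(u\otimes u)$ acts at the \emph{output} frequency $O(1)$, so there is no factor $2^{n_j}$. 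Two consequences follow. First, for $p<\infty$ the factor $2^{-2n_jd/p}$ makes every contribution vanish as $n_j\to\infty$, so the fixed-shell accumulation mechanism is dead regardless of timing. Second, even for $p=\infty$, your claim (i) requires $t\,2^{2n_j}\ll 1$ for \emph{every} $j$ (the linear displacement of the data is $\approx\delta\sup_j\min(1,ct2^{2n_j})$), and then the accumulated bilinear output is $\delta^2\sum_j t2^{2n_j}\lesssim \delta^2 t2^{2N}\ll\delta^2$: a geometric sum dominated by the top packet, so the $K$ packets buy nothing and no fixed lower bound $2\eta_0$ is possible. Genuine accumulation of $K$ contributions of size $\delta^2$ requires waiting until $t\gtrsim 2^{-2n_1}$, but by then the semigroup has damped every packet by a constant factor, so $\|u_L(t)-u_{0,N}\|_{\dot{B}^{d/p-1}_{p,\infty}}\approx\|u_{0,N}\|$, and your inequality $\|u-u_{0,N}\|\ge\|u_B\|-\|u_L-u_{0,N}\|-\|u_R\|$ forces you to prove $\|u_B\|\gg\|u_{0,N}\|$, i.e.\ genuine Bourgain--Pavlovi\'c norm inflation for the full compressible system, with the remainder control at $O(1)$ data size — exactly the hard open issue your step (iii) defers, and far more than the theorem asks.

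The paper's proof goes the opposite way and is much simpler: the deviation is produced by the \emph{linear} flow, and the entire nonlinearity is treated as a perturbation. The data is a \emph{single} wave packet at frequency $2^N$ with small amplitude $\delta$ (single-shell data has all $\dot{B}^{d/p-1}_{p,q}$ norms comparable, independent of $q$, so nothing special is needed to "see" the endpoint $q=\infty$). At $T_N=\eta 2^{-2N}$ one has the lower bound $\|e^{T_N\mu\Delta}u_{0,N}-u_{0,N}\|_{\dot{B}^{d/p-1}_{p,\infty}}\gtrsim\delta(\eta-C\eta^2)$, obtained from $e^{t\mu\Delta}-\mathrm{Id}=\mu\int_0^t e^{\tau\mu\Delta}\Delta\,\dd\tau$ and Bernstein's inequality on the single annulus; meanwhile the full hierarchy $(P_k,U_k,\Theta_k)_{k\ge2}$ of the power-series (Picard) expansion is estimated by induction in $\dot{B}^{d/p}_{p,1}$ — a Banach algebra, so your worry about endpoint product laws never arises — giving $\sum_{k\ge2}2^{-N}\|U_k\|_{\dot{B}^{d/p}_{p,1}}\lesssim\delta^2\eta$. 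Choosing $\delta$ small makes the nonlinear tail subordinate to the linear displacement, yielding $\eta_0\approx c\delta\eta$. In short: since the theorem only demands a \emph{fixed} positive deviation (not a growing one), no resonance or multi-packet constructive interference is needed, and the correct architecture is linear-main-term plus nonlinear-error, not the reverse.
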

\begin{remark}\label{re1}
Theorem \ref{th1} demonstrates the discontinuous of the solution for the Navier--Stokes equations \eqref{0} in $\dot{B}_{p, \infty}^{d/{p}}(\mathbb{R}^d) \times \dot{B}_{p, \infty}^{d/p-1}(\mathbb{R}^d) \times \dot{B}_{p, \infty}^{d/p-2}(\mathbb{R}^d)$. More precisely, there exists a sequence of initial data $u_{0,N}\in \dot{B}_{p, \infty}^{d/p-1}(\mathbb{R}^d)$ such that the sequence of corresponding smooth solutions $u\left(T_N\right)$ of \eqref{0} that start from $u_{0,N}$ do not converge back to $u_{0,N}$ in the metric of $\dot{B}_{p, \infty}^{d/p-1}(\mathbb{R}^d)$ as time goes to zero.
\end{remark}
To the beset of our knowledge, it is also an open question whether or not the Cauchy problem for incompressible Navier--Stokes equations is well-posed  in $\dot{B}_{p, \infty}^{d/p-1}(\mathbb{R}^d)$ with $1 \leq p < \infty$. As a by-product, we have
\begin{corollary}\label{cor}Let $d\geq2$ and $1 \leq p \leq \infty$. The Cauchy problem for the incompressible Navier--Stokes equations is ill-posed in $\dot{B}_{p, \infty}^{d/p-1}(\mathbb{R}^d)$ .
\end{corollary}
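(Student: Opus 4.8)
The plan is to rerun the scheme behind Theorem \ref{th1} for the incompressible system, the point being that its sole quadratic nonlinearity is the same convective term that drives the ill-posedness there. Writing $\p=\mathrm{Id}-\n\D^{-1}\div$ for the Leray projection onto divergence-free fields, the incompressible Navier--Stokes equations read
\beq
\pa_t u-\mu\D u+\p\,\div(u\otimes u)=0,\qquad \div u=0,\qquad u(0)=u_0 ,
\eeq
and, modulo the projection $\p$ and the replacement of $\mathcal L$ by $\mu\D$, the map $u\mapsto \p\,\div(u\otimes u)$ is exactly the convective interaction in the velocity equation of \eqref{0}. Since the discontinuity in Theorem \ref{th1} stems from this interaction and not from the density--pressure coupling, I would re-use the very same velocity data $\{u_{0,N}\}_N\subset\mathcal S(\R^d)$ and times $\{T_N\}_N$ produced there, which already satisfy $\|u_{0,N}\|_{\B^{d/p-1}_{p,\infty}}\thickapprox1$ and $T_N\to0$.

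First I would write the solution in Duhamel form and split it along one Picard step. With $u_{\mathrm{lin}}(t):=e^{\mu t\D}u_0$ and
\beq
B(v,w)(t):=-\int_0^t e^{\mu(t-s)\D}\,\p\,\div(v\otimes w)(s)\,\dd s,
\eeq
one has $u=u_{\mathrm{lin}}+B(u_{\mathrm{lin}},u_{\mathrm{lin}})+\mathcal R$, where $\mathcal R$ collects the higher-order remainder. The key structural observation, the same one exploited for \eqref{0}, is that the resonant part of $B(u_{\mathrm{lin}},u_{\mathrm{lin}})(T_N)$ can be arranged to land on a dyadic block $\De_{j_0}$ disjoint from the frequency support of $u_{0,N}$. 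On that block both $u_{\mathrm{lin}}(T_N)$ and $u_{0,N}$ carry no mass (the heat semigroup preserves frequency support), so
\bbal
\big\|u(T_N)-u_{0,N}\big\|_{\B^{d/p-1}_{p,\infty}}\geq 2^{j_0(d/p-1)}\big\|\De_{j_0}\big(B(u_{\mathrm{lin}},u_{\mathrm{lin}})+\mathcal R\big)(T_N)\big\|_{L^p},
\end{align*}
and the troublesome linear deviation $u_{\mathrm{lin}}(T_N)-u_{0,N}$ drops out entirely. The remainder $\mathcal R$ is then bounded above uniformly in $N$ by reproducing the multilinear estimates of the main proof; these transfer verbatim because $\p$ is a bounded Fourier multiplier on every homogeneous Besov space.

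It remains to bound $2^{j_0(d/p-1)}\|\De_{j_0}B(u_{\mathrm{lin}},u_{\mathrm{lin}})(T_N)\|_{L^p}$ from below, and here the same beating of high-frequency wave packets as in Theorem \ref{th1} deposits a non-decaying contribution on $\De_{j_0}$; because the $\B^{d/p-1}_{p,\infty}$ norm is a supremum over dyadic blocks, controlling this single block from below is all that is needed, and the many interacting modes may be packed into the data without increasing its $\ell^\infty$-type norm. The one genuinely new point, and the step I expect to be the main obstacle, is to verify that $\p$ does not cancel this contribution: since $\p$ annihilates precisely the curl-free part, I would fix the polarizations of the interacting packets so that $\div(u_{\mathrm{lin}}\otimes u_{\mathrm{lin}})$ retains a divergence-free component bounded below on $\De_{j_0}$, and then invoke that $\p$ is a homogeneous degree-zero multiplier, smooth and nondegenerate off the origin, to conclude that the lower bound survives the projection. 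Granting this, $2^{j_0(d/p-1)}\|\De_{j_0}B(u_{\mathrm{lin}},u_{\mathrm{lin}})(T_N)\|_{L^p}\geq 2\eta_0$ for all large $N$, and subtracting the small remainder yields $\|u(T_N)-u_{0,N}\|_{\B^{d/p-1}_{p,\infty}}\geq\eta_0$; that is, the solution map of the incompressible system fails to be continuous at the origin, which is the assertion of Corollary \ref{cor}.
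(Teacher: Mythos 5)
Your proposal contains a genuine gap, and it originates in a misreading of where the lower bound in Theorem \ref{th1} actually comes from. In the paper the persistent discontinuity is \emph{linear}, not nonlinear: with the single wave packet $u_{0,N}$ (Fourier-supported in the annulus $\frac{4}{3}2^N\le|\xi|\le\frac{3}{2}2^N$, so that only the block $\dot{\Delta}_N$ is active) and $T_N=\eta 2^{-2N}$, the term that is bounded from below is
\begin{equation*}
2^{N(d/p-1)}\big\|\dot{\Delta}_N\big(e^{T_N\mu\Delta}u_{0,N}-u_{0,N}\big)\big\|_{L^p}\;\gtrsim\;\delta\big(\eta-C\eta^{2}\big),
\end{equation*}
i.e.\ the failure of the heat semigroup to be strongly continuous at $t=0$ on $\dot{B}^{d/p-1}_{p,\infty}$ (the frequency $2^{N}$ and the time $2^{-2N}$ are tuned so that the block-$N$ mode is damped by a fixed fraction), while \emph{all} nonlinear terms $U_k$, $k\ge 2$, are estimated only from above (Proposition \ref{pro1}) and absorbed as errors. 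Your scheme does the opposite: you choose the block $j_0$ precisely so that the linear deviation ``drops out entirely,'' and then you try to bound the second Picard iterate $B(u_{\mathrm{lin}},u_{\mathrm{lin}})$ from below on that block. You have discarded the term the proof lives on and kept the term that is negligible.

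That remaining step fails quantitatively for every $p<\infty$, even granting your non-cancellation claim for the Leray projector $\mathcal{P}$. With the paper's data, which you propose to re-use, the components of $u_{0,N}$ are of size $\delta 2^{-dN/p}2^{N}$ on a fixed compact set, so $\|u_{\mathrm{lin}}\otimes u_{\mathrm{lin}}\|_{L^p}\lesssim \delta^{2}2^{2N(1-d/p)}$; moreover the non-oscillating (resonant) part of this tensor is Fourier-supported at frequencies $O(1)$, where $\operatorname{div}$, $\mathcal{P}$ and the heat semigroup act as $O(1)$ operators. Integrating the Duhamel formula over $[0,T_N]$ therefore gives
\begin{equation*}
2^{j_0(d/p-1)}\big\|\dot{\Delta}_{j_0}B(u_{\mathrm{lin}},u_{\mathrm{lin}})(T_N)\big\|_{L^p}\;\lesssim\;T_N\,\delta^{2}\,2^{2N(1-d/p)}\;=\;\eta\,\delta^{2}\,2^{-2dN/p}\;\longrightarrow\;0,
\end{equation*}
so no uniform lower bound $2\eta_0$ is available (the output at the doubled frequency $\approx\frac{17}{6}2^{N}$ fares no better: it is $\lesssim\eta\delta^{2}2^{-dN/p}$). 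Superposing many packets does not rescue this: once each packet is normalized in $\dot{B}^{d/p-1}_{p,\infty}$, its resonant low-frequency deposit within time $T_N\to 0$ decays geometrically in its frequency, so the sum is dominated by the lowest-frequency packet and there is no Bourgain--Pavlovi\'{c}-type amplification for finite $p$; this is precisely the obstruction that made ill-posedness in $\dot{B}^{d/p-1}_{p,\infty}$, $p<\infty$, an open problem, and it is why the paper switches to the linear mechanism. The correct ``minor modification'' of Theorem \ref{th1} keeps the lower bound on the block $\dot{\Delta}_N$ coming from $e^{t\mu\Delta}u_{0,N}-u_{0,N}$, and uses the boundedness of $\mathcal{P}$ (a homogeneous degree-zero multiplier, which is the sound part of your proposal) only to transfer the \emph{upper} bounds on the power-series remainder to the incompressible setting.
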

\noindent\textbf{Organization of our paper.} In Section \ref{sec2}, we list some notations and known results which will be used in the sequel. In Section \ref{sec3}, we just prove Theorem \ref{th1} since the proof of Corollary \ref{cor} can be done with minor modifications.
\section{Preliminary}\label{sec2}
We will use the following notations throughout this paper.
For $X$ a Banach space and $I\subset\R$, we denote by $\mathcal{C}(I;X)$ the set of continuous functions on $I$ with values in $X$. Sometimes we will denote $L^p(0,T;X)$ by $L_T^pX$.
Let us recall that for all $f\in \mathcal{S}'$, the Fourier transform $\mathcal{F}f$, also denoted by $\widehat{f}$, is defined by
$
\mathcal{F}f(\xi)=\widehat{f}(\xi)=\int_{\R^d}e^{-\mathrm{i}x\cd \xi}f(x)\dd x$ for any $\xi\in\R^d.$
The inverse Fourier transform allows us to recover $f$ from $\widehat{f}$:
$
f(x)=\mathcal{F}^{-1}\widehat{f}(x)=(2\pi)^{-d}\int_{\R^d}e^{\mathrm{i}x\cdot\xi}\widehat{f}(\xi)\dd\xi.
$
Next, we will recall some facts about the Littlewood-Paley decomposition and the nonhomogeneous Besov spaces (see \cite{BCD} for more details).
Choose a radial, non-negative, smooth function $\vartheta:\R^d\mapsto [0,1]$ such that
${\rm{supp}} \;\vartheta\subset B(0, 4/3)$ and
$\vartheta(\xi)\equiv1$ for $|\xi|\leq3/4$.
Setting $\varphi(\xi):=\vartheta(\xi/2)-\vartheta(\xi)$, then we deduce that $\varphi$ has the following properties
\begin{itemize}
  \item ${\rm{supp}} \;\varphi\subset \left\{\xi\in \R^d: 3/4\leq|\xi|\leq8/3\right\}$;
  \item $\varphi(\xi)\equiv 1$ for $4/3\leq |\xi|\leq 3/2$;
  \item $\vartheta(\xi)+\sum_{j\geq0}\varphi(2^{-j}\xi)=1$ for any $\xi\in \R^d$;
  \item $\sum_{j\in \mathbb{Z}}\varphi(2^{-j}\xi)=1$ for any $\xi\in \R^d\setminus\{0\}$.
\end{itemize}
The  homogeneous dyadic blocks are defined as follows
\begin{align*}
\forall\, f\in \mathcal{S}'_h(\R^d),\quad
\dot{\Delta}_jf=\varphi(2^{-j}D)f,\; \; \text{if}\;j\in \mathbb{Z},
\end{align*}
where the pseudo-differential operator is defined by $\sigma(D):f\to\mathcal{F}^{-1}(\sigma \mathcal{F}f)$ and $\mathcal{S}'_h$ is given by
\begin{eqnarray*}
\mathcal{S}'_h:=\f\{f \in \mathcal{S'}(\mathbb{R}^{d}):\; \lim_{j\rightarrow-\infty}\|\vartheta(2^{-j}D)f\|_{L^{\infty}}=0 \g\}.
\end{eqnarray*}

We recall the definition of the Besov Spaces and norms.
\begin{definition}[\cite{BCD}]
Let $s\in\mathbb{R}$ and $(p,q)\in[1, \infty]^2$. The  homogeneous Besov spaces is defined as follows
$$
\dot{B}^{s}_{p,q}=\f\{f\in \mathcal{S}'_h:\;\|f\|_{\dot{B}^{s}_{p,q}(\R^d)}<\infty\g\},
$$
where
\begin{align*}{\|f\|_{\dot{B}^{s}_{p,q}(\R^d)}=}\begin{cases}
\left\{\sum\limits_{j\in\mathbb{Z}}\f(2^{sjq}\|\dot{\Delta}_jf\|^q_{L^p(\R^d)}\g)\right\}^{1/q}, &\text{if}\; 1\leq q<\infty,\\
\sup\limits_{j\in\mathbb{Z}}2^{sj}\|\dot{\Delta}_jf\|_{L^p(\R^d)}, &\text{if}\; q=\infty.
\end{cases}
\end{align*}
\end{definition}
The following Bernstein's inequalities will be used in the sequel.
\begin{lemma}[\cite{BCD}] \label{lem2.1} Let $\mathcal{B}$ be a ball and $\mathcal{C}$ be an annulus. There exists a constant $C>0$ such that for all $k\in \mathbb{Z}^+\cup \{0\}$, any $\lambda\in \R^+$ and any function $f\in L^p$ with $1\leq p \leq q \leq \infty$, we have
\begin{align*}
&{\rm{supp}}\widehat{f}\subset \lambda \mathcal{B}\;\Rightarrow\; \|D^kf\|_{L^q}\leq C^{k+1}\lambda^{k+(\frac{1}{p}-\frac{1}{q})}\|f\|_{L^p},  \\
&{\rm{supp}}\widehat{f}\subset \lambda \mathcal{C}\;\Rightarrow\; C^{-k-1}\lambda^k\|f\|_{L^p} \leq \|D^kf\|_{L^p} \leq C^{k+1}\lambda^k\|f\|_{L^p}.
\end{align*}
\end{lemma}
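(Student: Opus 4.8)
The plan is to prove both inequalities by the classical multiplier-and-convolution method: I would reduce every estimate to the normalized frequency scale $\lambda=1$ by an exact rescaling, represent the operation in question (taking derivatives, or inverting them) as convolution against the inverse Fourier transform of a smooth compactly supported symbol, and then apply Young's inequality, keeping careful track of how the constants depend on $k$. To this end I would fix once and for all two auxiliary functions: a $\phi\in\mathcal{S}(\R^d)$ with $\phi\equiv1$ on a neighbourhood of $\mathcal{B}$ for the ball case, and a $\psi\in\mathcal{S}(\R^d)$ supported away from the origin with $\psi\equiv1$ on a neighbourhood of $\mathcal{C}$ for the annulus case.

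For the first inequality, since $\mathrm{supp}\,\widehat{f}\subset\lambda\mathcal{B}$ one has $\widehat{f}(\xi)=\phi(\xi/\lambda)\widehat{f}(\xi)$, hence for every multi-index $\alpha$ with $|\alpha|=k$ one gets $\partial^\alpha f=g_\alpha^\lambda*f$ with $g_\alpha^\lambda=\mathcal{F}^{-1}\big((\mathrm{i}\xi)^\alpha\phi(\xi/\lambda)\big)$. A change of variables gives the exact scaling $g_\alpha^\lambda(x)=\lambda^{d+k}g_\alpha^1(\lambda x)$, so Young's convolution inequality with $1+1/q=1/p+1/r$ yields $\|\partial^\alpha f\|_{L^q}\le\|g_\alpha^\lambda\|_{L^r}\|f\|_{L^p}=\lambda^{k+d(1/p-1/q)}\|g_\alpha^1\|_{L^r}\|f\|_{L^p}$, where I used $1-1/r=1/p-1/q$. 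Summing over the finitely many $\alpha$ with $|\alpha|=k$ reproduces the claimed power $\lambda^{k+(1/p-1/q)}$ (the extra $d$ being absorbed into the definition of the norm and the constant) together with the constant $\sum_{|\alpha|=k}\|g_\alpha^1\|_{L^r}$.

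The annulus upper bound is identical with $p=q$ and $\psi$ in place of $\phi$. For the lower bound I would exploit the multinomial identity $|\xi|^{2k}=\sum_{|\alpha|=k}\binom{k}{\alpha}\xi^{2\alpha}$, valid on $\R^d\setminus\{0\}$, which on $\lambda\mathcal{C}$ gives $\widehat f(\xi)=\sum_{|\alpha|=k}\binom{k}{\alpha}\frac{\xi^\alpha}{|\xi|^{2k}}\psi(\xi/\lambda)\,\xi^\alpha\widehat f(\xi)$, since $\psi(\xi/\lambda)\equiv1$ on $\mathrm{supp}\,\widehat f$ and $\sum_{|\alpha|=k}\binom{k}{\alpha}\xi^{2\alpha}/|\xi|^{2k}=1$. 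As $\xi^\alpha\widehat f$ is, up to a unimodular constant, the Fourier transform of $\partial^\alpha f$, this expresses $f$ as a finite sum of convolutions of the $k$-th order derivatives $\partial^\alpha f$ with kernels $\mathcal{F}^{-1}(m_\alpha^\lambda)$, $m_\alpha^\lambda(\xi)=\binom{k}{\alpha}\frac{\xi^\alpha}{|\xi|^{2k}}\psi(\xi/\lambda)$. The scaling $m_\alpha^\lambda(\xi)=\lambda^{-k}M_\alpha(\xi/\lambda)$ with $M_\alpha(\eta)=\binom{k}{\alpha}\eta^\alpha|\eta|^{-2k}\psi(\eta)$ is exact, so Young's inequality with exponent $1$ delivers $\lambda^k\|f\|_{L^p}\le\big(\sum_{|\alpha|=k}\|\mathcal{F}^{-1}M_\alpha\|_{L^1}\big)\|D^kf\|_{L^p}$, where each $M_\alpha$ is supported in the fixed annulus $\mathrm{supp}\,\psi$.

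The only genuinely delicate point, and the main obstacle, is to verify that all the multiplier constants above are bounded by $C^{k+1}$ for a single $C$ independent of $k$, rather than growing factorially. For this I would bound each $L^r$ (resp. $L^1$) norm of the inverse Fourier transform of a smooth compactly supported symbol $\sigma$ by finitely many of its derivatives, e.g. $\|\mathcal{F}^{-1}\sigma\|_{L^1}\les\sum_{|\gamma|\le d+1}\|\partial^\gamma\sigma\|_{L^1}$, obtained by inserting the weight $(1+|x|)^{d+1}$. The symbols $g_\alpha^1$ and $M_\alpha$ are supported in a fixed compact set, bounded away from the origin in the annulus case, on which $\eta^\alpha$, $|\eta|^{-2k}$ and their derivatives up to order $d+1$ are each controlled by $C^{k}$; combined with $\binom{k}{\alpha}\le C^k$ and the fact that $\sum_{|\alpha|=k}1$ and $\sum_{|\alpha|=k}\binom{k}{\alpha}=d^k$ grow only geometrically, the total sum stays $\le C^{k+1}$. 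I expect this bookkeeping — in particular showing that differentiating the quotient $\eta^\alpha/|\eta|^{2k}$ up to order $d+1$ does not produce a faster-than-geometric constant — to be the most technical step, whereas the scaling identities and the convolution estimates are routine.
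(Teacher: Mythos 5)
Your proof is correct and is essentially the standard argument behind this lemma, which the paper does not prove itself but quotes from \cite{BCD}: rescale to $\lambda=1$, realize differentiation (and, on the annulus, its inversion via the multinomial identity $|\xi|^{2k}=\sum_{|\alpha|=k}\binom{k}{\alpha}\xi^{2\alpha}$) as convolution with the inverse Fourier transform of a fixed compactly supported symbol, and conclude by Young's inequality, tracking the geometric growth $C^{k+1}$ of the multiplier norms exactly as you outline. One caveat: your computation correctly produces the exponent $\lambda^{k+d(\frac{1}{p}-\frac{1}{q})}$, which is the statement as it appears in \cite{BCD}; the exponent $\lambda^{k+(\frac{1}{p}-\frac{1}{q})}$ displayed in the paper is a typo, and your parenthetical suggestion that the discrepant power of $\lambda$ can be ``absorbed into the constant'' is not literally possible since $\lambda$ ranges over all of $\R^+$ --- but this does not affect your argument, which proves the correct inequality.
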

Next we recall the following product law which will be used often in the sequel.
\begin{lemma}[\cite{BCD}]\label{bana} $\dot{B}^{{d}/{p}}_{p,1}(\R^d)$ is a Banach algebra. Furthermore, there exists a positive constant $c_M$ such that
$$\|fg\|_{\dot{B}^{{d}/{p}}_{p,1}(\R^d)}\leq c_M\|f\|_{\dot{B}^{{d}/{p}}_{p,1}(\R^d)}\|g\|_{\dot{B}^{{d}/{p}}_{p,1}(\R^d)},\quad \forall (f,g)\in \dot{B}^{{d}/{p}}_{p,1}(\R^d).$$
\end{lemma}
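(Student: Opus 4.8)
The statement is the classical fact that the critical Besov space $\dot{B}^{d/p}_{p,1}$ is stable under multiplication, and the natural route is \emph{Bony's paraproduct decomposition}. The plan is to write, for $f,g\in\dot{B}^{d/p}_{p,1}$,
\[
fg=T_fg+T_gf+R(f,g),\qquad T_fg=\sum_{j\in\mathbb{Z}}\dot{S}_{j-1}f\,\dot{\Delta}_jg,\quad R(f,g)=\sum_{j\in\mathbb{Z}}\sum_{|j-j'|\le1}\dot{\Delta}_jf\,\dot{\Delta}_{j'}g,
\]
where $\dot{S}_{j-1}f=\sum_{k\le j-2}\dot{\Delta}_kf$, and to estimate the three pieces separately in $\dot{B}^{d/p}_{p,1}$. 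A preliminary ingredient, obtained directly from the first Bernstein inequality of Lemma~\ref{lem2.1} and the definition of the norm, is the embedding $\dot{B}^{d/p}_{p,1}\hookrightarrow L^\infty$, namely $\|f\|_{L^\infty}\le\sum_{k}\|\dot{\Delta}_kf\|_{L^\infty}\lesssim\sum_k 2^{kd/p}\|\dot{\Delta}_kf\|_{L^p}=\|f\|_{\dot{B}^{d/p}_{p,1}}$; this is exactly what makes the endpoint $q=1$ work.

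For the paraproduct $T_fg$, I would use that each summand $\dot{S}_{j-1}f\,\dot{\Delta}_jg$ is spectrally supported in an annulus of size $2^j$, so only finitely many scales contribute to $\dot{\Delta}_m(T_fg)$. Bounding $\|\dot{S}_{j-1}f\|_{L^\infty}\lesssim\|f\|_{L^\infty}\lesssim\|f\|_{\dot{B}^{d/p}_{p,1}}$ and using H\"older's inequality yields
\[
2^{md/p}\|\dot{\Delta}_m(T_fg)\|_{L^p}\lesssim\|f\|_{\dot{B}^{d/p}_{p,1}}\sum_{|m-j|\le N_0}2^{md/p}\|\dot{\Delta}_jg\|_{L^p},
\]
and summing the finite-width convolution in $m$ gives $\|T_fg\|_{\dot{B}^{d/p}_{p,1}}\lesssim\|f\|_{\dot{B}^{d/p}_{p,1}}\|g\|_{\dot{B}^{d/p}_{p,1}}$. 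The term $T_gf$ is handled identically by symmetry.

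The remainder $R(f,g)$ is the crux, and it is where the scaling $d/p$ enters decisively. Here each summand is supported in a \emph{ball} of size $2^j$, so $\dot{\Delta}_mR(f,g)$ collects all scales $j\ge m-N_1$. Using that $\dot{\Delta}_m$ is bounded on $L^p$, H\"older's inequality, and the Bernstein estimate $\|\dot{\Delta}_jf\|_{L^\infty}\lesssim2^{jd/p}\|\dot{\Delta}_jf\|_{L^p}$, I would get
\[
2^{md/p}\|\dot{\Delta}_mR(f,g)\|_{L^p}\lesssim\sum_{j\ge m-N_1}2^{(m-j)d/p}\,a_jb_j,\qquad a_j:=2^{jd/p}\|\dot{\Delta}_jf\|_{L^p},\ \ b_j:=2^{jd/p}\|\dot{\Delta}_jg\|_{L^p}.
\]
Summing over $m$ and exchanging the order of summation leaves a geometric factor $\sum_{m\le j+N_1}2^{(m-j)d/p}$, which is finite precisely because the total regularity $2d/p$ is positive; this converts the double sum into $\bigl(\sum_ja_j\bigr)\bigl(\sum_jb_j\bigr)\lesssim\|f\|_{\dot{B}^{d/p}_{p,1}}\|g\|_{\dot{B}^{d/p}_{p,1}}$. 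Collecting the three bounds proves the product estimate and hence the algebra property, with $c_M$ the accumulated implicit constant.

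The main obstacle is the convergence of this geometric series in the remainder, which degenerates exactly at $p=\infty$ (where $d/p=0$ and $\dot{B}^{d/p}_{p,1}=\dot{B}^{0}_{\infty,1}$): the crude H\"older bound no longer produces a summable factor in $m-j$. At this endpoint one must replace the crude bound by a genuine low-frequency gain---exploiting the Schwartz decay of the kernel $\mathcal{F}^{-1}\varphi$ against the high oscillation of $\dot{\Delta}_jf\,\dot{\Delta}_{j'}g$, or an integrability-improving Bernstein step---to recover decay in $m-j$; for $1\le p<\infty$ no such refinement is needed.
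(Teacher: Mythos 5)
Your argument is, for $1\le p<\infty$, precisely the standard proof behind the paper's citation: the paper does not prove this lemma at all but quotes \cite{BCD}, and the proof there is exactly your route --- Bony decomposition, the embedding $\dot{B}^{d/p}_{p,1}\hookrightarrow L^\infty$ (via Bernstein and $q=1$) to handle both paraproducts, and Bernstein plus a geometric sum for the remainder. Your bookkeeping is correct: annulus supports make $\dot{\Delta}_m(T_fg)$ a finite-width convolution in $m$, ball supports make $\dot{\Delta}_mR(f,g)$ collect all $j\ge m-N_1$, and the factor $2^{(m-j)d/p}$ is summable over $m\le j+N_1$ exactly when $d/p>0$. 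On that range the proposal is complete and matches the cited proof.

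The genuine gap is your final paragraph. The paper states and uses the lemma for $1\le p\le\infty$ (its main theorem includes $p=\infty$), and you assert that at $p=\infty$ a refinement --- kernel decay against oscillation, or an integrability-improving Bernstein step --- recovers decay in $m-j$. No such refinement exists: when $p=\infty$ the operator $\dot{\Delta}_m$ is merely bounded on $L^\infty$ and there is no Bernstein gain left to exploit, and an $L^\infty$-normalized product of two blocks at frequency $2^j$ can have low-frequency content of full size in \emph{every} block $m\le j$. Concretely, take $\chi$ Schwartz with small compact spectrum, $\chi(0)=1$, points $z_1,\dots,z_M$ widely separated, and set $\dot{\Delta}_jf=\sum_{m=1}^{M}e^{i(\frac{17}{12}2^j+\frac{17}{12}2^m)x_1}\chi(x-z_m)$, $\dot{\Delta}_jg=e^{-i\frac{17}{12}2^jx_1}\sum_{m=1}^{M}\chi(x-z_m)$ with $M\le j-10$: both blocks have $L^\infty$-norm $O(1)$, yet the product deposits a unit-size packet into each block $m=1,\dots,M$, so $\sum_{m\le j}\|\dot{\Delta}_m(\dot{\Delta}_jf\,\dot{\Delta}_jg)\|_{L^\infty}\sim M$, showing the crude bound is sharp. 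Superposing this over $J$ well-separated values of $j$ with the \emph{same} packet locations gives $\|f\|_{\dot{B}^{0}_{\infty,1}}\approx\|g\|_{\dot{B}^{0}_{\infty,1}}\approx J$ while $\|fg\|_{\dot{B}^{0}_{\infty,1}}\gtrsim MJ$, and $M\gg J$ violates the product inequality altogether: $\dot{B}^{0}_{\infty,1}$ is not a multiplication algebra. So the endpoint is not a removable technicality of your method but a genuine failure of the estimate there; note that the algebra theorems in \cite{BCD} assume positive regularity $s>0$, i.e.\ $p<\infty$ here, so the citation itself does not cover $p=\infty$ --- a point worth flagging, since the paper invokes the lemma in that case.
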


Finally, we recall the regularity estimates for the heat equations.
\begin{lemma}[\cite{BCD}]\label{rg}
Let $s\in \mathbb{R}$, $1\leq p,r\leq \infty$  and $1\leq q_1\leq q_2\leq \infty$. Assume that $u_0\in \dot{B}^s_{p,r}$ and $f\in {\LL}^{q_1}_T(\dot{B}^{s+{2}/{q_1}-2}_{p,r})$. Then the heat equations
\begin{align*}
\begin{cases}
\partial_tu-\kappa\Delta u=f,\\
 u(t=0)=u_0,
\end{cases}
\end{align*}
has a unique solution $u\in \LL^{q_2}_T(\dot{B}^{s+{2}/{q_2}}_{p,r}))$ satisfying for all $T>0$
\begin{align*}
\|u\|_{\LL^{q_2}_T\dot{B}^{s+{2}/{q_2}}_{p,r}}\leq C\left(\|u_0\|_{\dot{B}^s_{p,r}}+\|f\|_{{\LL}^{q_1}_T\dot{B}^{s+{2}/{q_1}-2}_{p,r}}\right).
\end{align*}
In particular, there holds
\begin{align*}
\|u\|_{\LL^{\infty}_T\dot{B}^{s}_{p,r}}+\|\kappa \Delta u\|_{\LL^{1}_T\dot{B}^{s}_{p,r}}+\|\pa_tu\|_{\LL^1_T\dot{B}^{s}_{p,r}}\leq C\left(\|u_0\|_{\dot{B}^s_{p,r}}+\|f\|_{\LL^1_T\dot{B}^{s}_{p,r}}\right).
\end{align*}
\end{lemma}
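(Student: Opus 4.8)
The plan is to localize in frequency with the homogeneous blocks and combine Duhamel's formula with the smoothing of the heat semigroup on dyadic annuli. Since $\De_j$ commutes with $\pa_t$ and $\D$, applying $\De_j$ to the equation shows that the localized unknown solves $\pa_t\De_j u-\kappa\D\De_j u=\De_j f$ with data $\De_j u_0$, so Duhamel's formula gives
\[
\De_j u(t)=e^{\kappa t\D}\De_j u_0+\int_0^t e^{\kappa(t-\tau)\D}\De_j f(\tau)\,\dd\tau.
\]
The whole argument then reduces to an $L^p$-in-space, $L^{q_2}$-in-time estimate for each block, followed by summation against the weight $2^{j(s+2/q_2)}$ in $\ell^r(\Z)$. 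Crucially, the Chemin--Lerner structure of $\LL^{q}_T$ (the time norm is taken \emph{before} the $\ell^r$ summation) is exactly what will let Young's inequality in time be applied frequency-by-frequency.

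The key step, and the analytic heart of the proof, is the frequency-localized smoothing estimate: there exist $c,C>0$ such that for every $j\in\Z$ and $t>0$,
\[
\|e^{\kappa t\D}\De_j g\|_{L^p}\les e^{-c\kappa 2^{2j}t}\|\De_j g\|_{L^p}.
\]
I would prove this by writing $e^{\kappa t\D}\De_j g=h_{j,t}\ast\De_j g$ with $h_{j,t}=\mathcal{F}^{-1}\big(e^{-\kappa t|\xi|^2}\tilde\varphi(2^{-j}\xi)\big)$, where $\tilde\varphi\in C_c^\infty$ equals $1$ on the support of $\varphi$; the change of variables $\xi=2^j\zeta$ together with the annulus support $|\zeta|\thickapprox1$ gives $\|h_{j,t}\|_{L^1}\les e^{-c\kappa 2^{2j}t}$, and Young's convolution inequality in $x$ transfers this to the stated $L^p$ bound. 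This kernel $L^1$-estimate is the main obstacle; the remaining steps are essentially bookkeeping consequences of it.

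Granting the smoothing bound, for the homogeneous part $\|e^{\kappa t\D}\De_j u_0\|_{L^{q_2}_T L^p}\les\|e^{-c\kappa 2^{2j}\cd}\|_{L^{q_2}(0,T)}\|\De_j u_0\|_{L^p}\les(\kappa 2^{2j})^{-1/q_2}\|\De_j u_0\|_{L^p}$, so multiplying by $2^{j(s+2/q_2)}$ collapses the power of $2^j$ to $2^{js}$ and the $\ell^r(\Z)$ sum contributes $\les\|u_0\|_{\B^s_{p,r}}$. For the Duhamel term I view it as a time convolution of $e^{-c\kappa 2^{2j}\cd}$ with $\|\De_j f(\cd)\|_{L^p}$ and apply Young's inequality in time with $1+\tfrac1{q_2}=\tfrac1a+\tfrac1{q_1}$; since $q_1\le q_2$ this gives an admissible $a\ge1$, and $\|e^{-c\kappa 2^{2j}\cd}\|_{L^a(0,T)}\les(\kappa 2^{2j})^{-1/a}$ with $2/a=2-2/q_1+2/q_2$. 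Multiplying by $2^{j(s+2/q_2)}$ leaves precisely the weight $2^{j(s+2/q_1-2)}$ of the target space, so the $\ell^r(\Z)$ sum yields $\les\|f\|_{\LL^{q_1}_T\B^{s+2/q_1-2}_{p,r}}$; adding the two contributions gives the main estimate.

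Finally, the ``in particular'' inequality follows by specializing indices. Taking $q_2=\infty,\ q_1=1$ at regularity $s$ bounds $\|u\|_{\LL^\infty_T\B^s_{p,r}}$, where all $\kappa$-factors are trivial because $a=q_2=\infty$; taking $q_1=q_2=1$ at regularity $s$ bounds $\|u\|_{\LL^1_T\B^{s+2}_{p,r}}$ up to a factor $\kappa^{-1}$, and since $\|\kappa\D u\|_{\LL^1_T\B^s_{p,r}}\thickapprox\kappa\|u\|_{\LL^1_T\B^{s+2}_{p,r}}$ the $\kappa$ cancels to give the clean bound; the estimate for $\pa_t u$ then comes directly from $\pa_t u=\kappa\D u+f$ and the triangle inequality. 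Uniqueness is immediate from linearity: setting $u_0=0$ and $f=0$, the main estimate forces $u\equiv0$, so two solutions with the same data coincide.
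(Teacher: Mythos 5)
Your proposal is correct, and it coincides with the proof in the source the paper cites: the paper itself gives no argument for this lemma, quoting it from \cite{BCD}, and your route --- Duhamel's formula on each dyadic block, the localized smoothing bound $\|e^{\kappa t\Delta}\dot{\Delta}_j g\|_{L^p}\lesssim e^{-c\kappa 2^{2j}t}\|\dot{\Delta}_j g\|_{L^p}$ proved through the $L^1$ kernel estimate after rescaling $\xi=2^j\zeta$, then Young's inequality in time applied frequency-by-frequency before the $\ell^r(\mathbb{Z})$ summation (which is precisely where the Chemin--Lerner structure is needed) --- is exactly the standard proof there, with the correct index bookkeeping $1+1/q_2=1/a+1/q_1$ so that $2^{j(s+2/q_2)}(\kappa 2^{2j})^{-1/a}$ reproduces the weight $2^{j(s+2/q_1-2)}$. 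The derivation of the ``in particular'' estimate by specializing $(q_1,q_2)=(1,\infty)$ and $q_1=q_2=1$, using Bernstein to trade $\kappa\Delta$ for $\kappa 2^{2j}$ so the $\kappa^{-1}$ cancels, and recovering $\partial_t u$ from the equation, is likewise the intended argument.
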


\section{Proof of Theorem \ref{th1}}\label{sec3}

In this section we prove our main result.
\subsection{Example for Initial Data}\label{subsec3.1}
The choice of initial data for showing the discontinuity is the most influencing. Before constructing the sequence of initial data, we need to introduce smooth, radial cut-off functions to localize the frequency region. Let $\widehat{\phi}\in \mathcal{C}^\infty_0(\mathbb{R})$ be an even, real-valued and non-negative function on $\R$ and satisfy
\begin{align*}
{\widehat{\phi}(\xi)=}\begin{cases}
1, &\text{if}\; |\xi|\leq \frac{1}{200d},\\
0, &\text{if}\; |\xi|\geq \frac{1}{100d}.
\end{cases}
\end{align*}
For any $p\in[1,\infty]$, there exists two positive constants $C_1$ and $C_2$ such that
\begin{align*}
C_1\leq\|\phi\|_{L^p(\R)}\leq C_2.
\end{align*}
{\bf Definition}\, Let $d\geq2$ and $p\in[1,\infty]$. For some fixed $\delta\in(0,1)$ which will be determined later and for any $N\in \mathbb{Z}^+$, we define $\left(\rho_0, u_0, \theta_0\right)=\left(0, u_{0,N}, 0\right)$  by
\begin{align*}
&u_{0,N}(x):=\delta2^{-\fr{d}{p}N}
\f(-\pa_2f_N(x),\;
\pa_1f_N(x),\;
\underbrace{0,\;
\cdots,\;
0}_{d-2}
\g),\quad \text{where}\\
&f_N(x):=\phi(x_1)\cos \left(\frac{17}{12}2^Nx_1\right)\prod_{i=2}^d\phi(x_i).
\end{align*}

\begin{lemma}\label{le1} Let $d\geq2$ and $(p,q)\in[1,\infty]^2$. The divergence-free vector field $u_{0,N}$ is defined as above.
Then for $\sigma\in \R$, there exists some sufficiently large $N\in \mathbb{Z}^+$ and some sufficiently enough $\delta>0$  such that
\begin{align}
&\|u_{0,N}\|_{\dot{B}^{d/p-1+\sigma}_{p,q}}\thickapprox \delta2^{\sigma N}\|\phi\|^d_{L^{p}}, \label{z1}\\
&2^{N(d/p-1)}\|\De_{N}u_{0,N}\|_{L^p}\thickapprox \delta\|\phi\|^d_{L^{p}}. \label{z2}
\end{align}
\end{lemma}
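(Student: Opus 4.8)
The plan is to exploit the fact that $u_{0,N}$ is, up to genuinely lower-order terms, a single Fourier mode of frequency $\tfrac{17}{12}2^N$ modulated by the fixed bump $\phi$, so that its Littlewood--Paley decomposition collapses to the single block $j=N$. First I would compute the Fourier support. Writing $\omega_N=\tfrac{17}{12}2^N$ and using $\cos(\omega_N x_1)=\tfrac12(e^{\mathrm{i}\omega_N x_1}+e^{-\mathrm{i}\omega_N x_1})$ together with the tensor structure of $f_N$, one gets
\begin{align*}
\widehat{f_N}(\xi)=\tfrac12\big(\widehat{\phi}(\xi_1-\omega_N)+\widehat{\phi}(\xi_1+\omega_N)\big)\prod_{i=2}^d\widehat{\phi}(\xi_i).
\end{align*}
Since $\mathrm{supp}\,\widehat\phi\subset\{|\cdot|\le\frac{1}{100d}\}$, the support of $\widehat{f_N}$ is contained in two small balls centred at $(\pm\omega_N,0,\dots,0)$, and on this set $2^{-N}|\xi|$ lies in an interval that shrinks to $\{\tfrac{17}{12}\}$ as $N\to\infty$. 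This is exactly where the choice $\tfrac{17}{12}$ is crucial: it is the midpoint of $[\tfrac43,\tfrac32]$ (at distance $\tfrac1{12}$ from each endpoint), so for $N$ large one has $2^{-N}|\xi|\in[\tfrac43,\tfrac32]$, while $2^{-(N\pm1)}|\xi|$ falls outside $[\tfrac34,\tfrac83]$.

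By the support and equal-to-one properties of $\varphi$ this yields $\dot\Delta_N f_N=f_N$ and $\dot\Delta_j f_N=0$ for $j\ne N$; since each component of $u_{0,N}$ is $\mathrm{i}\xi_k$ times $\widehat{f_N}$ on the Fourier side, the same localization holds for $u_{0,N}$, i.e. $\dot\Delta_N u_{0,N}=u_{0,N}$ and $\dot\Delta_j u_{0,N}=0$ for $j\ne N$. With this single-block concentration both the $q=\infty$ supremum and the $\ell^q$ sum defining the Besov norm reduce to the single term $j=N$, so for every $q$ and every $s=d/p-1+\sigma$,
\begin{align*}
\|u_{0,N}\|_{\dot B^{s}_{p,q}}=2^{sN}\|u_{0,N}\|_{L^p}.
\end{align*}
This reduces both \eqref{z1} and \eqref{z2} to the single estimate $\|u_{0,N}\|_{L^p}\thickapprox\delta2^{N(1-d/p)}\|\phi\|_{L^p}^d$.

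To evaluate the $L^p$ norm I would differentiate and observe that $\partial_1 f_N=-\omega_N\phi(x_1)\sin(\omega_N x_1)\prod_{i\ge2}\phi(x_i)+\phi'(x_1)\cos(\omega_N x_1)\prod_{i\ge2}\phi(x_i)$ consists of a term of size $\sim 2^N$ and a term of size $O(1)$, whereas $\partial_2 f_N=\phi(x_1)\cos(\omega_N x_1)\phi'(x_2)\prod_{i\ge3}\phi(x_i)$ is of size $O(1)$. Hence the Euclidean length of the vector $u_{0,N}$ is governed pointwise by its second component $\delta2^{-dN/p}\partial_1 f_N$, and using $\max_k\|v_k\|_{L^p}\le\|v\|_{L^p}\le\sum_k\|v_k\|_{L^p}$ one gets $\|u_{0,N}\|_{L^p}\thickapprox\delta2^{-dN/p}\|\partial_1 f_N\|_{L^p}\thickapprox\delta2^{-dN/p}\,\omega_N\,\|\phi(x_1)\sin(\omega_N x_1)\|_{L^p(\R)}\prod_{i=2}^d\|\phi\|_{L^p(\R)}$.

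The remaining quantitative point, which I expect to be the main obstacle, is the rapid-oscillation asymptotics $\|\phi(\cdot)\sin(\omega_N\,\cdot)\|_{L^p(\R)}\thickapprox\|\phi\|_{L^p(\R)}$ with constants independent of $N$. For $1\le p<\infty$ this follows from periodic averaging: as $\omega_N\to\infty$, $\int_{\R}|\phi(x_1)|^p|\sin(\omega_N x_1)|^p\,\dd x_1\to\|\phi\|_{L^p}^p\cdot\frac{1}{2\pi}\int_0^{2\pi}|\sin\theta|^p\,\dd\theta$, so the left-hand side is comparable to $\|\phi\|_{L^p}$ with a purely $p$-dependent constant; the case $p=\infty$ is handled separately, noting that $\|\phi\sin(\omega_N\cdot)\|_{L^\infty}\to\|\phi\|_{L^\infty}$ because the oscillation samples values arbitrarily close to $\pm1$ near the maximizers of $\phi$. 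Combining this with $\omega_N\thickapprox 2^N$ gives $\|u_{0,N}\|_{L^p}\thickapprox\delta2^{N(1-d/p)}\|\phi\|_{L^p}^d$; inserting this into the reduced Besov identity proves \eqref{z1}, since $2^{sN}\cdot2^{N(1-d/p)}$ collapses to $2^{\sigma N}$, and taking $s=d/p-1$ together with the single surviving block $\dot\Delta_N u_{0,N}=u_{0,N}$ yields \eqref{z2}. All implied constants are uniform in $N$ and $\delta$ (which factors through linearly), with $C_1\le\|\phi\|_{L^p}\le C_2$ guaranteeing control of the boundary factors.
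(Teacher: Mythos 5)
Your proposal is correct and takes essentially the same route as the paper: the Fourier support of $\widehat{u_{0,N}}$ sits in the annulus where $\varphi(2^{-N}\cdot)\equiv 1$, so the Besov norm collapses to the single block $j=N$, after which everything reduces to $\|u_{0,N}\|_{L^p}\thickapprox \delta 2^{N(1-d/p)}\|\phi\|_{L^p}^d$ via the dominant $\partial_1 f_N$ term. Your periodic-averaging argument for $\|\phi\sin(\tfrac{17}{12}2^N\cdot)\|_{L^p}\thickapprox\|\phi\|_{L^p}$ (and the $p=\infty$ case) merely fills in a step the paper dismisses as straightforward computation, so there is no substantive difference in method.
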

\begin{proof}\,Straightforward computations yield
\bbal
\widehat{f_N}(\xi)=
\left[\widehat{\phi}\left(\xi_1-\frac{17}{12}2^N\right)+\widehat{\phi}\left(\xi_1+\frac{17}{12}2^N\right)\right]\prod_{i=2}^d\widehat{\phi}(\xi_i),
\end{align*}
which implies
\bal\label{s}
\mathrm{supp} \ \widehat{u_{0,N}}\subset \left\{\xi\in\R^d: \ \frac{4}{3}2^N\leq |\xi|\leq \frac{3}{2}2^N\right\}.
\end{align}
Due to the fact $\varphi(\xi)\equiv 1$ for $\frac43\leq |\xi|\leq \frac32$, namely,
\bbal
\varphi\f(2^{-N}\xi\g)\equiv 1\quad \text{in}\;  \f\{\xi\in\R^d: \ \frac{4}{3}2^{N}\leq |\xi|\leq \frac{3}{2}2^{N}\g\},
\end{align*}
we have
\begin{align}\label{a}
{\Delta_j(f_N)=\mathcal{F}^{-1}\f(\varphi(2^{-j}\cdot)\widehat{f}_{N}\g)=}\begin{cases}
f_N, &\text{if}\; j=N,\\
0, &\text{if}\; j\neq N.
\end{cases}
\end{align}
In particular, it holds that for some sufficiently large $N\in \mathbb{Z}^+$
 \begin{align*}
 \dot{\Delta}_{N}u_{0,N}=\delta2^{-\fr{d}{p}N}
\f(
-\pa_2f_N,\;
\pa_1f_N,\;
0,\;
\cdots,\;
0
\g).
    \end{align*}
Then we deduce that
\begin{align*}
 2^{N(d/p-1)}\|\dot{\Delta}_{N}u_{0,N}\|_{L^p(\R^d)}&
= \delta2^{-N}\f(\|\pa_1f_N\|_{L^{p}(\R^d)}+\|\pa_2f_N\|_{L^{p}(\R^d)}\g)\thickapprox \delta\|\phi\|^d_{L^{p}},
\end{align*}
from which and \eqref{a}, it follows that
\begin{align*}
 \|u_{0,N}\|_{\dot{B}_{p,q}^{d/p-1+\sigma}(\R^d)}&=2^{N(d/p-1+\sigma)}\|\dot{\Delta}_Nu_{0,N}\|_{L^p(\R^d)}\thickapprox \delta2^{\sigma N}\|\phi\|^d_{L^{p}}.
\end{align*}
This completes the proof of Lemma \ref{le1}.
\end{proof}
\subsection{Reformulation of the equation}\label{subsec3.2}
Motivated by \cite{IO21,AI23}, we introduce the following integral equations of \eqref{0}.
$$
\left\{\begin{aligned}
&\rho(t)  =-\int_0^t \left\{\operatorname{div}u(\tau)+\operatorname{div}\left(\rho u(\tau)\right)\right\} \dd \tau, \\
&u(t) =e^{t \mathcal{L}} u_0-\int_0^t e^{(t-\tau) \mathcal{L}}\left\{(1+\rho)(u \cdot \nabla) u(\tau)+\nabla P(\tau)+\rho \partial_\tau u(\tau)\right\} \dd \tau, \\
&\theta(t)= -\int_0^t e^{(t-\tau)\kappa \Delta} \left\{(1+\rho)(u \cdot \nabla) \theta(\tau)+P\operatorname{div} u(\tau)+\rho\partial_\tau \theta(\tau)
-2 \mu|\mathrm{D}(u(\tau))|^2-\lambda|\operatorname{div} u(\tau)|^2\right\} \dd \tau,
\end{aligned}\right.
$$
where $P=(1+\rho) \theta$. Based on the expansion of the solution for initial data $\left(0, \varepsilon u_{0,N}, 0\right)$ with small parameter $\varepsilon$
$$
(\rho, u, \theta)=\left(P_0, U_0, \Theta_0\right)+\varepsilon\left(P_1, U_1, \Theta_1\right)+\varepsilon^2\left(P_2, U_2, \Theta_2\right)+\cdots,
$$
we define the expansion $\left\{\left(P_k, U_k, \Theta_k\right)\right\}_{k=0}^{\infty}$ for the initial data $(0,u_{0,N},0)$
\begin{align}\label{y1}
& \begin{cases}
P_0(t) \equiv 0, \\
P_1(t) \equiv-\int_0^t \operatorname{div} U_1(\tau) \dd \tau, \\
P_2(t) \equiv-\int_0^t \left\{\operatorname{div} U_2(\tau)+\operatorname{div}\left(P_{1}(\tau) U_{1}(\tau)\right)\right\} \dd \tau, \\
P_k(t) \equiv-\int_0^t\left\{\operatorname{div} U_k(\tau)+\sum\limits_{k_1+k_2=k} \operatorname{div}\left(P_{k_1}(\tau) U_{k_2}(\tau)\right)\right\} \dd \tau, \quad k \geq 3,
\end{cases}
\end{align}
\begin{align}\label{y2}
& \begin{cases}
U_0(t) \equiv 0, \\
U_1(t) \equiv e^{t \mathcal{L}} u_{0,N}, \\
U_2(t) \equiv -\int_0^t e^{(t-\tau) \mathcal{L}}\left\{U_1(\tau) \cdot \nabla U_1(\tau)+\nabla \Theta_2(\tau)+P_1(\tau) \partial_\tau U_1(\tau)\right\} \dd \tau, \\
U_k(t) \equiv -\int_0^t e^{(t-\tau) \mathcal{L}}\left\{\sum\limits_{k_1+k_2=k}U_{k_1}(\tau) \cdot \nabla U_{k_2}(\tau)+\sum\limits_{k_1+k_2+k_3=k} P_{k_3}(\tau)U_{k_1}(\tau) \cdot \nabla U_{k_2}(\tau)\right.\\
\quad\quad\quad+\nabla \f(\Theta_k(\tau)+\sum\limits_{k_1+k_2=k} P_{k_1}(\tau) \Theta_{k_2}(\tau)\g)
\left.+\sum\limits_{k_1+k_2=k} P_{k_1}(\tau) \partial_\tau U_{k_2}(\tau)\right\} \dd \tau, \quad k \geq 3,
\end{cases}
\end{align}
\begin{align}\label{y3}
& \begin{cases}
\Theta_0(t) \equiv \Theta_1(t) \equiv 0, \\
\Theta_2(t) \equiv \int_0^t e^{(t-\tau) \kappa \Delta}\left\{\widetilde{D}\left(U_1(\tau)\right): \widetilde{D}\left(U_1(\tau)\right)\right\}\dd \tau, \\
\Theta_k(t) \equiv -\int_0^t e^{(t-\tau) \kappa \Delta}\left\{\sum\limits_{k_1+k_2=k}U_{k_1}(\tau) \cdot \nabla \Theta_{k_2}(\tau)\right. +\sum\limits_{k_1+k_2+k_3=k} P_{k_3}(\tau)U_{k_1}(\tau) \cdot \nabla \Theta_{k_2}(\tau) \\
\quad\quad\quad+\sum\limits_{k_1+k_2=k} \Theta_{k_1}(\tau) \operatorname{div} U_{k_2}(\tau)+\sum\limits_{k_1+k_2+k_3=k} P_{k_3}(\tau) \Theta_{k_1}(\tau) \operatorname{div} U_{k_2}(\tau) \\
\quad\quad\quad+\sum\limits_{k_1+k_2=k} P_{k_1}(\tau) \partial_\tau \Theta_{k_2}(\tau)
\left.-\sum\limits_{k_1+k_2=k} \widetilde{D}\left(U_{k_1}(\tau)\right): \widetilde{D}\left(U_{k_2}(\tau)\right)\right\} \dd \tau, \quad k \geq 3,\end{cases}
\end{align}
where we have used the notations $\widetilde{D}(u): \widetilde{D}(v) \equiv 2 \mu \operatorname{tr}\big(\mathrm{D} u \cdot \mathrm{D}^\top v\big)+\lambda(\operatorname{div} u \cdot \operatorname{div} v)$ and
$$
\sum_{k_1+k_2=k} a_{k_1} b_{k_2}=\sum_{k_1=1}^{k-1} a_{k_1} b_{k-k_1} \quad\text{and}\quad \sum_{k_1+k_2+k_3=k} a_{k_1} b_{k_2} c_{k_3}=\sum_{k_1=1}^{k-1} \sum_{k_2=1}^{k-k_1} a_{k_1} b_{k_2} c_{k-\left(k_1+k_2\right)}.
$$
Then we have the expansion of solution as follows:
\bal\label{zk}
\rho(t)=\left.\sum_{k=1}^{\infty} \varepsilon^k P_k(t)\right|_{\varepsilon=1}, \quad u(t)=\left.\sum_{k=1}^{\infty} \varepsilon^k U_k(t)\right|_{\varepsilon=1}, \quad \theta(t)=\left.\sum_{k=1}^{\infty} \varepsilon^k \Theta_k(t)\right|_{\varepsilon=1}
\end{align}
for the initial data $(\rho, u, \theta)(t=0)=\left(0, u_{0,N}, 0\right)$.
\subsection{Estimation for ($P_k,U_k,\Theta_k$)}\label{subsec3.3}
\begin{proposition}\label{pro1} Let $u_{0,N}$ be defined as above. Then there exist $C_0>0$ such that for all $t \leq 2^{-2 N}$,
\begin{align}
 \left(t 2^N\right)^{-1}\left\|P_k(t)\right\|_{\dot{B}^{d/p}_{p,1}}+\left\|U_k(t)\right\|_{\dot{B}^{d/p}_{p,1}} +2^{-N}\left\|\Theta_k(t)\right\|_{\dot{B}^{d/p}_{p,1}} \leq C_0^{k-1} t^{k-1} 2^{(2k-1) N} \delta^k .
\end{align}
\end{proposition}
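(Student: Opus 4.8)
The plan is to prove the bound by strong induction on $k$, reproducing the recursive structure of the expansion \eqref{y1}--\eqref{y3}. The base case $k=1$ is essentially the linear heat estimate: since $U_1(t)=e^{t\mathcal{L}}u_{0,N}$ and $u_{0,N}$ is frequency-localized at $2^N$ by \eqref{s}, Lemma \ref{rg} (applied to the parabolic operator $\mathcal{L}$, which on divergence-free-type pieces behaves like $\mu\Delta$) together with Lemma \ref{le1} gives $\|U_1(t)\|_{\dot{B}^{d/p}_{p,1}}\lesssim \|u_{0,N}\|_{\dot{B}^{d/p}_{p,1}}\thickapprox\delta 2^N$, which is exactly $t^0 2^N\delta$ up to a constant. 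The companion quantities are then read off from the defining formulas: $P_1(t)=-\int_0^t\div U_1\,\dd\tau$ contributes one spatial derivative (a factor $2^N$ by Bernstein, Lemma \ref{lem2.1}) and one time integration over $[0,t]$, yielding the prefactor $(t2^N)^{-1}$ in the statement, while $\Theta_1\equiv0$ trivially satisfies its bound.

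For the inductive step I would assume the claimed estimate for all indices below $k$ and insert it into the right-hand sides of \eqref{y1}, \eqref{y2}, \eqref{y3}. The mechanism is uniform across the three families: each nonlinear term is a product (or product with a derivative) of lower-order factors, so I would use the Banach-algebra property of $\dot{B}^{d/p}_{p,1}$ from Lemma \ref{bana} to split the norm of a product into a product of norms, then substitute the inductive hypotheses. The key is bookkeeping the homogeneity. Every factor $U_{k_i}$ contributes $C_0^{k_i-1}t^{k_i-1}2^{(2k_i-1)N}\delta^{k_i}$; every $P_{k_i}$ contributes an extra $(t2^N)$ relative to a $U$ of the same index; every spatial gradient $\nabla$ costs $2^N$ by Bernstein; and the outer Duhamel integral $\int_0^t e^{(t-\tau)\mathcal{L}}\cdots\,\dd\tau$ costs one time factor $t$ via Lemma \ref{rg} while preserving the regularity index. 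One checks that for a generic term with $k_1+k_2=k$ (or $k_1+k_2+k_3=k$) the powers of $t$, of $2^N$ and of $\delta$ recombine to give precisely $t^{k-1}2^{(2k-1)N}\delta^k$, and that the number of summands in each convolution sum is $O(k)$, which can be absorbed into $C_0^{k-1}$ by choosing $C_0$ large. The temperature equations \eqref{y3} require tracking the quadratic strain terms $\widetilde{D}(U_{k_1}):\widetilde{D}(U_{k_2})$, each of which carries two gradients and hence two extra factors of $2^N$; this is consistent with the $2^{-N}$ weight placed on $\Theta_k$ in the statement.

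The main obstacle is the consistency of the counting across the coupled $P$/$U$/$\Theta$ recursions, because the bounds are not independent: the estimate for $U_k$ feeds on $\Theta_k$ (through the $\nabla\Theta_k$ term) and on the $P_{k_i}$, and $\Theta_k$ in turn feeds on lower $U$'s, so a naive induction on $k$ alone risks circularity at the top index $k$. I would resolve this by proving the three bounds \emph{simultaneously} at each level $k$, ordering the internal computation so that $\Theta_k$ (whose formula involves only strictly lower indices in its genuinely nonlinear part, together with the previously-bounded objects) is estimated first, then $U_k$ using $\Theta_k$, then $P_k$ using $U_k$; the self-referential $\nabla\Theta_k$ inside $U_k$ and the $\div U_k$ inside $P_k$ are handled because $\Theta_k$ and $U_k$ are each bounded before they are reused. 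A secondary technical point is that $\mathcal{L}=\mu\Delta+(\mu+\lambda)\nabla\div$ is not a pure Laplacian; on the frequency annuli at scale $2^N$ it is nonetheless a bounded-below second-order operator, so the semigroup $e^{t\mathcal{L}}$ obeys the same maximal-regularity estimate as the heat semigroup in Lemma \ref{rg} up to constants, and I would either invoke this directly or decompose into Helmholtz (divergence-free and gradient) parts where each piece is governed by a genuine heat flow. The restriction $t\le 2^{-2N}$ is what keeps $t2^N\le 2^{-N}\le1$ so that the geometric series in \eqref{zk} converges; this will be used in the subsequent convergence argument rather than in the proof of the proposition itself.
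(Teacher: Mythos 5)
Your overall architecture --- a simultaneous strong induction on $k$, with the internal ordering $\Theta_k\to U_k\to P_k$ to break the apparent circularity, products split by the algebra property of Lemma \ref{bana}, gradients priced by Bernstein, and the Duhamel integrals handled by Lemma \ref{rg} --- is the same as the paper's. However, there are two genuine gaps that prevent the induction from closing as you have set it up.

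First, the recursions \eqref{y2} and \eqref{y3} contain the terms $\sum_{k_1+k_2=k}P_{k_1}(\tau)\,\partial_\tau U_{k_2}(\tau)$ and $\sum_{k_1+k_2=k}P_{k_1}(\tau)\,\partial_\tau \Theta_{k_2}(\tau)$, coming from $-\rho\partial_t u$ and $-\rho\partial_t\theta$ in \eqref{0}. Your induction hypothesis is the statement of the proposition itself, which controls only $\|P_j(t)\|_{\dot{B}^{d/p}_{p,1}}$, $\|U_j(t)\|_{\dot{B}^{d/p}_{p,1}}$, $\|\Theta_j(t)\|_{\dot{B}^{d/p}_{p,1}}$ and says nothing about the time derivatives $\partial_\tau U_j$, $\partial_\tau\Theta_j$; these terms never appear in your bookkeeping, and they cannot be recovered from the hypothesis you assume. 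This is exactly why the paper does not prove the proposition by a direct induction but instead proves the stronger Lemma \ref{le2}, whose induction hypothesis carries $\|\partial_t U_j\|_{L^1_t\dot{B}^{d/p}_{p,1}}$, $\|\mathcal{L}U_j\|_{L^1_t\dot{B}^{d/p}_{p,1}}$, $\|\partial_t\Theta_j\|_{L^1_t\dot{B}^{d/p}_{p,1}}$, $\|\kappa\Delta\Theta_j\|_{L^1_t\dot{B}^{d/p}_{p,1}}$ alongside the $L^\infty_t$ norms; these are precisely the quantities furnished by the second inequality of Lemma \ref{rg}, and the proposition then follows as a corollary.

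Second, your absorption claim fails quantitatively. Each inductive step gains exactly one factor of $C_0$ (two lower-order factors give $C_0^{k-2}$ against the target $C_0^{k-1}$), but it loses factors growing with $k$: the bilinear sums have $k-1$ terms and the trilinear ones $O(k^2)$; moreover $\operatorname{supp}\widehat{U_{k_i}}\subset\{|\xi|\le k_i2^{N+1}\}$, so Bernstein prices a gradient at $k_i2^{N+1}$, not $2^N$ as you state. The only compensation is the factor $\frac{1}{k-1}$ from $\int_0^t\tau^{k-2}\,\dd\tau$, and the net loss per step is still unbounded, e.g. $\frac{1}{k-1}\sum_{k_1+k_2=k}k_2\approx k/2$. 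No fixed constant $C_0$, however large, beats a loss of order $k$ uniformly in $k$, and there is no slack in the powers of $t$, $2^N$, $\delta$ to hide it, since the bound must hold as $t\to0$ where both sides vanish like $t^{k-1}$. The paper's remedy is to build the summable weights $(1+k)^{-4}$ into Lemma \ref{le2} and use the convolution inequality $\frac{1}{k-1}\sum_{k_1+k_2=k}k_1k_2(1+k_1)^{-4}(1+k_2)^{-4}\le C(1+k)^{-4}$, which restores a uniform constant per step; this device (or an equivalent one) is indispensable, not a cosmetic choice. A last, minor point: the restriction $t\le 2^{-2N}$ is used \emph{inside} the proof of the proposition (already at $k=2$, to absorb $t^22^{5N}\delta^2$ into $t2^{3N}\delta^2$), not only in the later summation of the series \eqref{zk} as you suggest.
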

\begin{proof}
Due to support condition \eqref{s} of $u_{0,N}$, one has
\begin{align*}
\operatorname{supp} \widehat{P_k},\, \operatorname{supp} \widehat{U_k},\, \operatorname{supp} \widehat{\Theta_k}\
\subset\left\{\xi \in \mathbb{R}^d: \,|\xi| \leq k 2^{N+1}\right\}, \quad k=1,2,3, \cdots .
\end{align*}

{\bf In the case when $k=1$}. Due to $\eqref{y2}_2$, one has
\begin{align}\label{u1}
& \begin{cases}
\pa_tU_1-\mu \Delta U_1-(\mu+\lambda)\nabla\div\,U_1=0,\\
U_1(t=0)=u_{0,N},\\
\div\,u_{0,N}=0.
\end{cases}\end{align}
Obviously, it holds that $\div\,U_1=0$. Thus \eqref{u1} reduces to
\begin{align*}
& \begin{cases}
\pa_tU_1-\mu \Delta U_1=0,\\
U_1(t=0)=u_{0,N},\\
\div\,u_{0,N}=0.
\end{cases}\end{align*}
From Duhamel's principle, it follows that
\begin{align}\label{u2}
& \begin{cases}
P_1(t)=0, \\
U_1(t)=e^{\mu t \Delta} u_{0,N}, \\
\Theta_1(t)= 0.
\end{cases}\end{align}
By Lemma \ref{rg} and \eqref{u2}, we deduce that
\begin{align}\label{u3}
& \left\|U_1(t)\right\|_{\dot{B}^{d/p}_{p,1}} \leq C\|u_{0,N}\|_{\dot{B}^{d/p}_{p,1}} \leq C 2^N\delta, \\
& \left\|P_1(t)\right\|_{\dot{B}^{d/p}_{p,1}}+\left\|\Theta_1(t)\right\|_{\dot{B}^{d/p}_{p,1}} =0.
\end{align}

{\bf In the case when $k=2$}. Due to \eqref{y1}-\eqref{y3}, one has
\begin{align}
& \begin{cases}
P_2(t)=-\int_0^t \operatorname{div} U_2(\tau) \dd \tau, \\
U_2(t)=-\int_0^t e^{(t-\tau) \mathcal{L}}\left(U_1(\tau) \cdot \nabla U_1(\tau)+\nabla \Theta_2(\tau)\right) \dd \tau, \\
\Theta_2(t)= \int_0^t e^{(t-\tau) \kappa \Delta}\left(2 \mu \operatorname{tr}\left(\mathrm{D} U_1(\tau) \cdot \mathrm{D}^\top U_1(\tau)\right)\right) \dd \tau, \\
\end{cases}
\end{align}
it follows from Lemma \ref{rg} and \eqref{u3} that
\begin{align*}
\left\|\Theta_2(t)\right\|_{\dot{B}^{d/p}_{p,1}} &\leq  C \int_0^t \left\|\nabla U_1(\tau)\right\|_{\dot{B}^{d/p}_{p,1}}^2 \dd \tau \leq  C \int_0^t 2^{2 N}\left\|U_1(\tau)\right\|_{\dot{B}^{d/p}_{p,1}}^2 \dd \tau \leq C t 2^{4 N} \delta^2, \\
\left\|U_2(t)\right\|_{\dot{B}^{d/p}_{p,1}}&\leq C \int_0^t\left(\left\|U_1(\tau)\right\|_{\dot{B}^{d/p}_{p,1}}\left\|\nabla U_1(\tau)\right\|_{\dot{B}^{d/p}_{p,1}}+2^N\left\|\Theta_2(\tau)\right\|_{\dot{B}^{d/p}_{p,1}}
\right) \dd \tau \\
&\leq  C\left(t 2^{3 N} \delta^2+t^2 2^{5 N} \delta^2\right) \leq C t 2^{3 N} \delta^2, \quad t \leq T= 2^{-2 N}, \\
\left\|P_2\right\|_{\dot{B}^{d/p}_{p,1}}  &\leq  C \int_0^t2^N\left\|U_2(\tau)\right\|_{\dot{B}^{d/p}_{p,1}}
 \dd \tau \leq C t^2 2^{4 N} \delta^2.
\end{align*}

{\bf In the case when $k\geq3$}. Firstly, we need to estimate $\partial_t U_k$ and $\partial_t \Theta_k$. By Lemma \ref{rg}, it is straightforward to verify that for $t\leq T$ with $T=2^{-2 N}$
$$
\begin{gathered}
\left\|\partial_t \Theta_2\right\|_{L^1_t \dot{B}^{d/p}_{p,1}}+\left\|\kappa\Delta\Theta_2\right\|_{L^1_t \dot{B}^{d/p}_{p,1}} \leq C t 2^{4 N} \delta^2, \\
\left\|\partial_t U_2\right\|_{L^1_t \dot{B}^{d/p}_{p,1}}+\left\|\mathcal{L} U_2\right\|_{L^1_t \dot{B}^{d/p}_{p,1}} \leq C t 2^{3 N} \delta^2.
\end{gathered}
$$

It remains to prove the following Lemma to complete the proof of Proposition \ref{pro1}.
\end{proof}
\begin{lemma}\label{le2}There exist positive constants $c_1$ and $c_2$ such that for all $t<T= 2^{-2 N}$,
\begin{align*}
&\left\|P_k(t)\right\|_{\dot{B}^{d/p}_{p,1}} \leq c_1^{k-1} c_2^k t^k 2^{2k N} \delta^k(1+k)^{-4}, \\
&\left\|U_k(t)\right\|_{\dot{B}^{d/p}_{p,1}}+\left\|\partial_t U_k(\cdot)\right\|_{L^1_t\dot{B}^{d/p}_{p,1}}+\left\|\mathcal{L} U_k(\cdot)\right\|_{L^1_t\dot{B}^{d/p}_{p,1}}
\leq c_1^{k-1} c_2^{k-1} t^{k-1} 2^{(2k-1) N} \delta^k(1+k)^{-4}, \\
&\left\|\Theta_k(t)\right\|_{\dot{B}^{d/p}_{p,1}}+\left\|\partial_t \Theta_k(\cdot)\right\|_{L^1_t\dot{B}^{d/p}_{p,1}}+\left\|\kappa \Delta \Theta_k(\cdot)\right\|_{L^1_t\dot{B}^{d/p}_{p,1}}
\leq c_1^{k-1} c_2^k t^{k-1} 2^{2k N} \delta^k(1+k)^{-4} .
\end{align*}
\end{lemma}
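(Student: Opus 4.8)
The plan is to prove the three families of estimates simultaneously by strong induction on $k$, the cases $k=1,2$ being exactly the computations already carried out in the proof of Proposition \ref{pro1}. Fix $k\ge3$ and assume all three bounds hold for every index $1\le j\le k-1$. Since $\Theta_k$ in \eqref{y3} is built only from factors with indices strictly below $k$, while $U_k$ in \eqref{y2} depends on $\Theta_k$ of the \emph{same} index through the term $\na\Theta_k$, and $P_k$ in \eqref{y1} in turn depends on $U_k$, I would estimate in the order $\Theta_k$, then $U_k$, then $P_k$, which makes the scheme genuinely inductive rather than circular.

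The three analytic tools are Lemma \ref{rg}, applied to each Duhamel formula, which controls $\|U_k\|_{\dot B^{d/p}_{p,1}}$ and $\|\Theta_k\|_{\dot B^{d/p}_{p,1}}$ by the $L^1_t\dot B^{d/p}_{p,1}$ norm of the corresponding forcing and simultaneously delivers the companion $\partial_t$, $\mathcal L$ and $\kappa\Delta$ estimates for free; Lemma \ref{bana}, used to factor every product into a product of $\dot B^{d/p}_{p,1}$ norms with constant $c_M$; and Lemma \ref{lem2.1}, which, because $\operatorname{supp}\widehat{U_{k_i}},\operatorname{supp}\widehat{\Theta_{k_i}},\operatorname{supp}\widehat{P_{k_i}}\subset\{|\xi|\le k_i2^{N+1}\}$, replaces each gradient by a factor $\les k_i2^N$. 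The decisive bookkeeping observation is that every nonlinear term carries one time integration $\int_0^t\tau^{k-2}\,\dd\tau=t^{k-1}/(k-1)$, whose factor $1/(k-1)$ exactly absorbs the polynomial factor $k_i\le k-1$ produced by Bernstein, so that all that remains is the elementary convolution bound
$$
\sum_{k_1+k_2=k}(1+k_1)^{-4}(1+k_2)^{-4}\les(1+k)^{-4}
$$
together with its three-index analogue; this is precisely why the weight $(1+k)^{-4}$ was inserted into the target bounds. Matching powers of $t$ and $2^N$ then forces the stated form: each gradient contributes one $2^N$, each density factor $P_{k_i}$ contributes the extra $t2^N$ visible in its own estimate, and any surplus of the shape $t\,2^{2N}$ is discarded through the standing hypothesis $t<T=2^{-2N}$, exactly as in the passage from $t^22^{5N}\delta^2$ to $t2^{3N}\delta^2$ in the case $k=2$.

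Carried out term by term for $U_k$, the quadratic transport $\sum_{k_1+k_2=k}U_{k_1}\cdot\na U_{k_2}$ reproduces $c_1^{k-2}c_2^{k-2}t^{k-1}2^{(2k-1)N}\delta^k(1+k)^{-4}$, while the cubic term involving $P_{k_3}$, the pressure--gradient piece $\na(\sum_{k_1+k_2=k}P_{k_1}\Theta_{k_2})$, and the term $\sum P_{k_1}\partial_\tau U_{k_2}$ all yield strictly lower powers of $c_1$ and are further helped by the $t2^{2N}\le1$ slack. The bounds for $P_k$ and $\Theta_k$ follow by the same mechanism: $P_k$ is one further time integral of $\div U_k+\sum\div(P_{k_1}U_{k_2})$, which raises the $U_k$ bound by one power of $t$ and one factor $2^N$ into the stated $t^k2^{2kN}\delta^k(1+k)^{-4}$, whereas the dominant contribution to $\Theta_k$ is the quadratic $\widetilde D(U_{k_1}):\widetilde D(U_{k_2})$, whose two gradients supply the extra $2^N$ distinguishing $2^{2kN}$ from $2^{(2k-1)N}$.

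The main obstacle is closing the induction \emph{uniformly in $k$}, which forces a two-constant scheme rather than a single constant. The genuinely nonlinear self-interactions lose two or more powers of the constants and are controlled by taking $c_1$ large, since they force $c_1c_2$ to exceed a fixed multiple of $c_M$ and the constants furnished by Lemmas \ref{rg} and \ref{lem2.1} (times the finitely many numerical factors from the weight sums). The delicate term is instead the \emph{linear} coupling $\na\Theta_k$ in the velocity equation: feeding the $\Theta_k$ bound, which carries $c_2^k$, into $U_k$, whose target carries only $c_2^{k-1}$, leaves one surplus power of $c_2$ that the $t2^{2N}\le1$ slack cannot remove. This is resolved by fixing $c_2$ \emph{small} first, below the reciprocal of the product of the constants from Lemmas \ref{rg} and \ref{lem2.1}, so that this contribution stays within budget, and only afterwards choosing $c_1$ large (depending on the now-fixed $c_2$) to dominate every self-interaction; the very split $c_2^{k}$ versus $c_2^{k-1}$ appearing in the statement is what leaves room for this ordering of the constants to succeed.
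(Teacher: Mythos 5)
Your scheme is the same as the paper's (induction in the order $\Theta_k\to U_k\to P_k$, Lemma \ref{rg} for the Duhamel integrals, Lemma \ref{bana} for products, Lemma \ref{lem2.1} for derivatives, and the convolution inequality for the weights $(1+k)^{-4}$), and you are right to single out the linear coupling $\nabla\Theta_k$ in the velocity equation as the term that forces a constraint on $c_2$. The gap is in your resolution: fixing $c_2$ small destroys the $P_k$ estimate, which you dismiss as following ``by the same mechanism.'' The stated target for $P_k$ carries $c_2^{k}$, one power \emph{more} than the $c_2^{k-1}$ in the $U_k$ target, so the ``raise by one $t$ and one $2^N$'' does not land on the claimed bound for free. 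Indeed, denoting by $C_B\ge 1$ the Bernstein constant of Lemma \ref{lem2.1}, the support property $\mathrm{supp}\,\widehat{U_k}\subset\{|\xi|\le k2^{N+1}\}$ and the (just established) $U_k$ bound give
\begin{align*}
\int_0^t\big\|\operatorname{div}U_k(\tau)\big\|_{\dot{B}^{d/p}_{p,1}}\dd\tau
\le C_B\,k2^{N+1}\int_0^t\big\|U_k(\tau)\big\|_{\dot{B}^{d/p}_{p,1}}\dd\tau
\le 2C_B\,c_1^{k-1}c_2^{k-1}t^{k}2^{2kN}\delta^k(1+k)^{-4},
\end{align*}
which is $\tfrac{2C_B}{c_2}$ times the claimed bound for $P_k$; no choice of $c_1$ can compensate, since $c_1$ enters both sides with the same power. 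Closing this step therefore requires $c_2\gtrsim C_B\ge 1$ (this is exactly why the paper imposes $c_1c_2>5C(c_1+c_M)$, i.e.\ takes $c_2$ \emph{large}), while your treatment of $\nabla\Theta_k$ requires $c_2$ below the reciprocal of the product of the constants of Lemmas \ref{rg} and \ref{lem2.1}, hence $c_2<1$. No $c_2$ satisfies both, so the induction you describe does not close.

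It is worth saying that the tension you identified is genuine and symmetric: with the powers exactly as in the statement ($c_2^{k-1}$ for $U_k$, $c_2^{k}$ for both $P_k$ and $\Theta_k$), feeding $\Theta_k$ into $U_k$ costs a factor of order $c_2\,(t2^{2N})\le c_2$ times absolute constants $\ge1$, while feeding $U_k$ into $P_k$ costs a factor of order $1/c_2$ times such constants, and these cannot both be made small on the whole range $t<2^{-2N}$. (The paper's own write-up chooses $c_2$ large and leaves the $U_k$ step as ``analogous,'' so it faces the mirror image of your problem there.) A genuine repair must change the bookkeeping rather than the order in which the constants are chosen: for example, weaken the $\Theta_k$ target to $c_1^{k-1}c_2^{k-2}t^{k-1}2^{2kN}\delta^k(1+k)^{-4}$, after which $c_2$ large makes both critical ratios of size $1/c_2$ and all quadratic and cubic terms are absorbed by $c_1$; or else prove the lemma only for $t\le\epsilon_0 2^{-2N}$ with $\epsilon_0$ small, so that the factor $t2^{2N}\le\epsilon_0$ supplies the smallness your argument lacks. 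Either variant still suffices for Proposition \ref{pro1} and for the proof of Theorem \ref{th1}, where only $t=\eta 2^{-2N}$ with $\eta$ small is used; but as written, your proposal fails at the $P_k$ step.
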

\begin{proof} We assume the estimate for $1,2, \ldots k-1$ with $k \geq 3$, and show the case of $k$ to apply an induction argument. We notice from the definition of $(P_k, U_k, \Theta_k)$ that $\Theta_k$ can be handled by the assumption for $1,2, \ldots, k-1$, while $U_k$ needs the estimate of $\Theta_k$ and $P_k$ needs the estimate of $U_k$.

We emphasize that $c_M$ appeared in Lemma \ref{bana}. From now on, we denote by $c_2$ the biggest absolute constant such that the inequalities before Proposition \ref{pro1} holds. We will also apply
$$
\frac{1}{k-1} \sum_{k_1+k_2=k} k_1 k_2\left(1+k_1\right)^{-4}\left(1+k_2\right)^{-4} \leq \frac{C}{(1+k)^4},
$$
and take $c_1$ such that $c_1>\max\{10 C (c_M+1),c_M,\,1\}$.

Using Lemmas \ref{rg} and \ref{bana}, one has
$$
\begin{aligned}
&\frac{1}{c_M}\f\{\left\|\Theta_k(t)\right\|_{\dot{B}^{d/p}_{p,1}} +\left\|\partial_t \Theta_k(t)\right\|_{L^1_t \dot{B}^{d/p}_{p,1}}+\left\|\kappa \Delta \Theta_k(t)\right\|_{L^1_t \dot{B}^{d/p}_{p,1}}\g\} \\
\leq &~ \int_0^t\left\{ c_2\sum_{k_1+k_2=k}\|U_{k_1}\|_{\dot{B}^{d/p}_{p,1}} k_2 2^N\|\Theta_{k_2}\|_{\dot{B}^{d/p}_{p,1}}\right.\\
&~ +c_M c_2 \sum_{k_1+k_2+k_3=k}\|P_{k_3}\|_{\dot{B}^{d/p}_{p,1}}\|U_{k_1}\|_{\dot{B}^{d/p}_{p,1}} k_2 2^N\|\Theta_{k_2}\|_{\dot{B}^{d/p}_{p,1}} \\
&~+ c_2 \sum_{k_1+k_2=k}\|\Theta_{k_1}\|_{\dot{B}^{d/p}_{p,1}} k_2 2^N\|U_{k_2}\|_{\dot{B}^{d/p}_{p,1}}\\
 &~+c_M c_2 \sum_{k_1+k_2+k_3=k}\|P_{k_3}\|_{\dot{B}^{d/p}_{p,1}}\|\Theta_{k_1}\|_{\dot{B}^{d/p}_{p,1}} k_2 2^N\|U_{k_2}\|_{\dot{B}^{d/p}_{p,1}} \\
&~+ \sum_{k_1+k_2=k}\|P_{k_1}\|_{\dot{B}^{d/p}_{p,1}}\|\partial_\tau \Theta_{k_2}\|_{\dot{B}^{d/p}_{p,1}}\\
&~\left.+ c_2^2 \sum_{k_1+k_2=k} k_1 2^N\|U_{k_1}\|_{\dot{B}^{d/p}_{p,1}} k_2 2^N\|U_{k_2}\|_{\dot{B}^{d/p}_{p,1}}\right\} \dd \tau.
\end{aligned}
$$
We estimate the second term as follows.
\begin{align*}
& \int_0^t  c_M c_2 \sum_{k_1+k_2+k_3=k}\|P_{k_3}\|_{\dot{B}^{d/p}_{p,1}}\|U_{k_1}\|_{\dot{B}^{d/p}_{p,1}} k_2 2^N\|\Theta_{k_2}\|_{\dot{B}^{d/p}_{p,1}} \dd \tau \\
\leq &~\int_0^t \tau^{k-2} \dd \tau \cdot  c_M c_2 c_1^{k-3} c_2^{k-1} 2^{2k N} \delta^k \sum_{k_1+k_2+k_3=k} k_2\left(1+k_1\right)^{-4}\left(1+k_2\right)^{-4}\left(1+k_3\right)^{-4}  \\
\leq &~\frac{1}{k-1} c_Mc_1^{k-3}  c_2^k t^{k-1} 2^{2k N} \delta^k \sum_{k_1+k_2=k} k_2\left(1+k_1\right)^{-4}\left(1+k_2\right)^{-4}\left(1+k_3\right)^{-4}\\
\leq &~\frac{Cc_M}{c_1} c_1^{k-2}  c_2^k t^{k-1} 2^{2k N} \delta^k(1+k)^{-4}\leq\frac{C}{c_1} c_1^{k-1} c_2^k t^{k-1} 2^{2k N}\delta^k(1+k)^{-4} .
\end{align*}
We estimate the final term as follows.
$$
\begin{aligned}
& \int_0^t  c_2^2 \sum_{k_1+k_2=k} k_1 2^N\|U_{k_1}\|_{\dot{B}^{d/p}_{p,1}} k_2 2^N\|U_{k_2}\|_{\dot{B}^{d/p}_{p,1}} \dd \tau \\
\leq &~\int_0^t \tau^{k-2} \dd \tau \cdot  c_2^2 c_1^{k-2} c_2^{k-2} 2^{2k N} \delta^k \sum_{k_1+k_2=k} k_1 k_2\left(1+k_1\right)^{-4}\left(1+k_2\right)^{-4} \\
\leq &~\frac{1}{k-1} c_1^{k-2}  c_2^k t^{k-1} 2^{2k N} \delta^k \sum_{k_1+k_2=k} k_1 k_2\left(1+k_1\right)^{-4}\left(1+k_2\right)^{-4}\\
\leq &~C c_1^{k-2}  c_2^k t^{k-1} 2^{2k N} \delta^k(1+k)^{-4}=\frac{C}{c_1} c_1^{k-1} c_2^k t^{k-1} 2^{2k N}\delta^k(1+k)^{-4} .
\end{aligned}
$$
By estimating other terms and the assumption of the induction, we have
$$
\left\|\Theta_k\right\|_{L^{\infty}_t \dot{B}^{d/p}_{p,1}} \leq \frac{10 C (c_M+1)}{c_1} c_1^{k-1} c_M c_2^k t^{k-1} 2^{2k N} \delta^k(1+k)^{-4} .
$$
Notice that $10 C (c_M+1) / c_1 \leq 1$, then we conclude the estimate for $\Theta_k$. We can also show the estimate for $U_k$ in an analogous way to $\Theta_k$.
In fact
$$
\begin{aligned}
&\|U_k(t)\|_{\dot{B}^{d/p}_{p,1}} +\|\partial_t U_k(t)\|_{L^1_t \dot{B}^{d/p}_{p,1}}+\left\|\kappa U_k(t)\right\|_{L^1_t \dot{B}^{d/p}_{p,1}} \\
\leq &~ \int_0^t\left\{c_M c_2\sum_{k_1+k_2=k}\|U_{k_1}\|_{\dot{B}^{d/p}_{p,1}} k_2 2^N\|U_{k_2}\|_{\dot{B}^{d/p}_{p,1}}\right.\\
 &~+c_M^2 c_2 \sum_{k_1+k_2+k_3=k}\|P_{k_3}\|_{\dot{B}^{d/p}_{p,1}}\|U_{k_1}\|_{\dot{B}^{d/p}_{p,1}} k_2 2^N\|U_{k_2}\|_{\dot{B}^{d/p}_{p,1}} \\
&~+kc_22^N\|\Theta_{k}\|_{\dot{B}^{d/p}_{p,1}}  +c_Mc_2k2^N \sum_{k_1+k_2=k}\|P_{k_1}\|_{\dot{B}^{d/p}_{p,1}}\|\Theta_{k_2}\|_{\dot{B}^{d/p}_{p,1}}\\
 &~\left.+c_M \sum_{k_1+k_2=k}\|P_{k_1}\|_{\dot{B}^{d/p}_{p,1}}\|\partial_\tau U_{k_2}\|_{\dot{B}^{d/p}_{p,1}}\right\} \dd \tau.
\end{aligned}
$$
The modification of the constant also appears in the estimate of $P_k$. We take $c_1>1$ and $c_2>1$ such that $c_1 c_2>5 C\left(c_1+c_M\right)$. By the assumption of the induction, we see that
$$
\begin{aligned}
\left\|P_k(t)\right\|_{\dot{B}^{d/p}_{p,1}} & \leq \int_0^t\|\operatorname{div} U_k(s)\|_{\dot{B}^{d/p}_{p,1}} \dd s+ \int_0^t\sum_{k_1+k_2=k} \|\operatorname{div}\left(P_{k_1}(s) U_{k_2}(s)\right)\|_{\dot{B}^{d/p}_{p,1}} \dd s \\
& \leq C k 2^N \int_0^t\|U_k(s)\|_{\dot{B}^{d/p}_{p,1}} \dd s
+C k 2^N \int_0^t\sum_{k_1+k_2=k} \|P_{k_1}(s) U_{k_2}(s)\|_{\dot{B}^{d/p}_{p,1}} \dd s  \\
& \leq C c_1^{k-1} c_2^{k-1} t^k 2^{2k N} \delta^k(1+k)^{-4}+Cc_M c_1^{k-2}  c_2^{k-1} t^k2^{2k N} \delta^k \sum_{k_1+k_2=k}\left(1+k_1\right)^{-4}\left(1+k_2\right)^{-4} \\
& \leq \frac{C\left(c_1+c_M\right)}{c_1 c_2} c_1^{k-1} c_2^k t^k 2^{k N}\delta^k(1+k)^{-4}  \leq c_1^{k-1} c_2^k t^k 2^{2k N} \delta^k(1+k)^{-4},
\end{aligned}
$$
where we have used $5 C\left(c_1+c_M\right)<c_1 c_2$. Then we completed the proof of Lemma \ref{le2}.
\end{proof}

{\bf Completion of the proof of Theorem \ref{th1}}\;
Using the triangle inequality, we obtain from \eqref{zk} that
\bal\label{lf1}
\|u(t)-u_{0,N}\|_{\dot{B}^{d/{p}-1}_{p,\infty}}
&\geq2^{{N(\fr{d}{p}-1)}}\big\|\De_{N}\big(U_1-u_{0,N}\big)\big\|_{L^p}
-\sum_{k=2}^{\infty}2^{{N(\fr{d}{p}-1)}}\big\|\De_{N}U_k\big\|_{L^p}\nonumber\\
&\geq 2^{2N}2^{{N(\fr{d}{p}-3)}}\big\|\De_{N}\big(e^{t\mu \Delta}u_{0,N}-u_{0,N}\big)\big\|_{L^p}-\sum_{k=2}^{\infty}2^{-N}\|U_k\|_{\dot{B}^{d/p}_{p,1}}.
\end{align}
 Obviously, one has
\bbal
e^{t\mu \Delta }u_{0,N}-u_{0,N}=\mu\int_0^t\f(e^{\tau\mu \Delta} \Delta u_{0,N}\g) \dd\tau .
\end{align*}
From which, using Bernstein's inequality (see Lemma \ref{lem2.1}) and Lemma \ref{le1}, we obtain that
\bbal
2^{{N(\fr{d}{p}-3)}}\left\|\De_{N}\big(e^{t\mu\Delta}u_{0,N}-u_{0,N}\big)\right\|_{L^p}\approx\mu\int_0^t\left\|e^{\tau\mu \Delta} u_{0,N}\right\|_{\dot{B}^{d/{p}-1}_{p,\infty}}\dd\tau\les t\|u_{0,N}\|_{\dot{B}^{d/{p}-1}_{p,\infty}}\approx \delta t.
\end{align*}
Combining the above and Lemma \ref{le1} yields that
\bal\label{ylf}
&\quad\mu^{-1}2^{{N(\fr{d}{p}-3)}}\left\|\De_{N}\big(e^{t\mu\Delta}u_{0,N}-u_{0,N}\big)\right\|_{L^p}\nonumber\\
&\geq  t2^{{N(\fr{d}{p}-3)}}\f\|\De_{N}\Delta u_{0,N}\g\|_{L^p}-\int_0^t2^{{N(\fr{d}{p}-3)}}\left\|\De_{N}\Delta\big(e^{\tau\mu\Delta}u_{0,N}-u_{0,N} \big)\right\|_{L^{p}}\dd\tau\nonumber\\
&\geq  Ct2^{{N(\fr{d}{p}-1)}}\f\|\De_{N}u_{0,N}\g\|_{L^p}-C2^{2N}\int_0^t2^{{N(\fr{d}{p}-3)}}\left\|\De_{N}\big(e^{\tau\mu\Delta}u_{0,N}-u_{0,N} \big)\right\|_{L^{p}}\dd\tau\nonumber\\
&\geq C\delta t-C2^{2N}\delta t^2.
\end{align}
Using Proposition \ref{pro1} and \eqref{ylf}, then we deduce from \eqref{lf1} that
\bbal
\|u(t)-u_{0,N}\|_{\dot{B}^{d/{p}-1}_{p,\infty}}
\geq c\delta (t2^{{2N}}-t^22^{4N})-\sum_{k=2}^{\infty} C_0^{k-1} t^{k-1} 2^{2 (k-1) N} \delta^k(1+k)^{-4}.
\end{align*}
Thus, picking $t=\eta2^{-2N}$ with small $\eta$, we deduce that
\bbal
\f\|u(\eta2^{-2N})-u_{0,N}\g\|_{\dot{B}^{d/{p}-1}_{p,\infty}}&\geq c\delta(\eta-C\eta^2)-\delta\sum_{k=1}^{\infty}(C_0\eta \delta)^{k}(2+k)^{-4}\\
&\geq c\delta\f(\eta-C\eta^2-C_0\eta \delta\g)\geq c_0\delta\eta>0.
\end{align*}
This completes the proof of Theorem \ref{th1}.

\section*{Acknowledgments}
Y. Yu is supported by the National Natural Science Foundation of China (12101011). J. Li is supported by the National Natural Science Foundation of China (12161004), Training Program for Academic and Technical Leaders of Major Disciplines in Ganpo Juncai Support Program(20232BCJ23009) and Jiangxi Provincial Natural Science Foundation (20224BAB201008).

\section*{Declarations}
\noindent\textbf{Data Availability} No data was used for the research described in the article.

\vspace*{1em}
\noindent\textbf{Conflict of interest}
The authors declare that they have no conflict of interest.


\begin{thebibliography}{00}\label{ref:ref}\addtolength{\itemsep}{-1.6ex}

\bibitem{AI23} M. Aoki, T. Iwabuchi, On the ill-posedness for the full system of compressible Navier--Stokes equations, arXiv:2303.06888.
\bibitem{BCD} H. Bahouri, J.Y. Chemin, R. Danchin, Fourier Analysis and Nonlinear Partial Differential Equations, Grundlehren Math. Wiss., vol.343, Springer-Verlag, Berlin, Heidelberg, 2011.
\bibitem{b08} J. Bourgain, N. Pavlovi\'{c}, Ill-posedness of the Navier--Stokes equations in
a critical space in 3D, J. Func. Anal. 255(9) (2008), 2233-2247.
\bibitem{C1} Q. Chen, C. Miao, Z. Zhang, Global well-posedness for compressible Navier--Stokes equations with highly oscillating initial velocity, Comm. Pure Appl. Math. 63(9) (2010), 1173-1224.
\bibitem{C2} Q. Chen, C. Miao, Z. Zhang, Well-posedness in critical spaces for the compressible Navier--Stokes equations with density dependent viscosities, Rev. Mat. Iberoam. 26(3) (2010), 915-946.
\bibitem{C3} Q. Chen, C. Miao, Z. Zhang, On the ill-posedness of the compressible Navier--Stokes equations in the critical Besov spaces, Rev. Mat. Iberoam. 31 (2015), 1375-1402.
\bibitem{CW} J. Chen, R. Wan, Ill-posedness for the compressible Navier--Stokes equations with the velocity in $L^6$ framework, J. Inst. Math. Jussieu. 18 (2019), 829-854.
\bibitem{d1} R. Danchin, Global existence in critical spaces for compressible Navier--Stokes equations, Invent. Math. 141(3) (2000), 579-614.
\bibitem{d3} R. Danchin, Global existence in critical spaces for flows of compressible viscous and heat-conductive gases, Arch. Ration. Mech. Anal. 160(1) (2001), 1-39.
\bibitem{d4} R. Danchin, Local theory in critical spaces for compressible viscous and heat-conductive gases, Comm. Partial Differential Equations, 26(7-8) (2001), 1183-1233.
\bibitem{d5} R. Danchin, Well-posedness in critical spaces for barotropic viscous fluids with truly not constant density, Comm. Partial Differential Equations, 32(9) (2007), 1373-1397.
\bibitem{d6} F. Charve, R. Danchin, A global existence result for the compressible Navier--Stokes equations in the critical $L^p$ framework,
Arch. Ration. Mech. Anal. 198(1) (2010), 233-271.
\bibitem{fu} H. Fujita, T. Kato, On the Navier--Stokes initial value problem. I, Arch.
Rational Mech. Anal. 16 (1964), 269-315.
\bibitem{g08} P. Germain, The second iterate for the Navier--Stokes equation, J. Func. Anal. 255(9) (2008), 2248-2264.
\bibitem{H1} B. Haspot, Existence of global strong solutions in critical spaces for barotropic viscous fluids, Arch. Ration. Mech. Anal. 202(2) (2011),
    427-460.
\bibitem{H2} B. Haspot, Well-posedness in critical spaces for the system of compressible Navier--Stokes in larger spaces, J. Differential Equations, 251 (2011), 2262-2295.
\bibitem{IO21} T. Iwabuchi, T. Ogawa, Ill-posedness for the Cauchy problem of the two-dimensional compressible Navier--Stokes equations for an ideal gas, J. Elliptic Parabol. Equ. 7(2) (2021), 571-587.


\bibitem{IO22} T. Iwabuchi, T. Ogawa, Ill-posedness for the compressible Navier--Stokes equations under barotropic condition in
limiting Besov spaces, J. Math. Soc. Japan, 74(2) (2022), 353-394.











\end{thebibliography}
\end{document}